\newcommand{\lvt}{\left|\kern-1.35pt\left|\kern-1.3pt\left|}
\newcommand{\rvt}{\right|\kern-1.3pt\right|\kern-1.35pt\right|}
\newtheorem{thm}{Theorem}[section]
\newtheorem{cor}[thm]{Corollary}
\newtheorem{lem}[thm]{Lemma}
\newtheorem{defn}[thm]{Definition}
\theoremstyle{remark}
\newtheorem{rem}{Remark}[section]
 \def\la{{\langle}}
 \def\ra{{\rangle}}
 \def\ff{{\mathfrak f}} 
 \def\hf{{\mathfrak h}}
 \def\a{{\alpha}}
 \def\b{{\beta}}
 \def\g{{\gamma}}
 \def\t{{\theta}}
 \def\l{{\lambda}}
 \def\d{{\delta}}
 \def\la{{\langle}}
 \def\ra{{\rangle}}
 \def\CJ{{\mathcal J}}
 \def\CS{{\mathcal S}}
 \def\CV{{\mathcal V}}
 \def\CW{{\mathcal W}}
 \def\NN{{\mathbb N}}
 \def\RR{{\mathbb R}}
\def\lla{\langle{\kern-2.5pt}\langle}      
\def\rra{\rangle{\kern-2.5pt}\rangle}
\newcommand{\wh}{\widehat}
\def\f{\frac}
\begin{document}

\title[]
{Approximation by polynomials in Sobolev spaces with Jacobi weight}
\author{Yuan Xu}
\address{  Department of Mathematics\\ University of Oregon\\
    Eugene, Oregon 97403-1222.}
\email{yuan@math.uoregon.edu}
 
\date{\today}
\keywords{Approximation, simultaneous approximation, Sobolev space, Jacobi weight}
\subjclass[2010]{ 41A10, 41A25, 42C05, 42C10, 33C45}
\thanks{The author was supported in part by NSF Grant DMS-1510296}
 
\begin{abstract}
Polynomial approximation is studied in the Sobolev space $W_p^r(w_{\alpha,\beta})$ that consists of functions 
whose $r$-th derivatives are in weighted $L^p$ space with the Jacobi weight function $w_{\alpha,\beta}$. This 
requires simultaneous approximation of a function and its consecutive derivatives up to $s$-th order with $s \le r$. 
We provide sharp error estimates given in terms of $E_n(f^{(r)})_{L^p(w_{\alpha,\beta})}$, the error of best 
approximation to $f^{(r)}$ by polynomials in $L^p(w_{\alpha,\beta})$, and an explicit construction of the polynomials
that approximate simultaneously with the sharp error estimates. 
\end{abstract}

\maketitle

\section{Introduction} \label{sect1}
\setcounter{equation}{0}

Polynomial approximation on a finite interval is a classical problem at the center of approximation theory. The purpose
of this paper is to consider simultaneous approximation of a function and its derivatives by polynomials on an interval 
in $L^p$ norms defined with respect to a Jacobi weight function. Although this problem has been studied by several 
researchers, our results are new in several aspects. 

Let $w_{\a,\b}$ be the Jacobi weight function defined by $w_{\a,\b}(x) : = (1-x)^\a (1+x)^\b$ for $\a,\b > -1$ and 
$x \in (-1,1)$. For $1 \le p < \infty$, define 
$$
\|f\|_{L^p(w_{\a,\b})} := \left( \int_{-1}^1 |f(x)|^p w_{\a,\b}(x) dx\right)^{1/p},
$$
and, for $p = \infty$, define this norm as the usual uniform norm $\|f\|_\infty$. For $r \in \NN$, let $C^r[-1,1]$ denote the 
space of functions that have $r$-th continuous derivatives on $[-1,1]$.  For $1\le p < \infty$, let $\CW_p^r(w_{\a,\b})$ 
be the Sobolev space 
$$
      W_p^r(w_{\a,\b}): = \{f \in C^{r-1}[-1,1]: f^{(r)} \in L^p(w_{\a,\b})\},
$$
and, for $p  = \infty$, define this space as $C^r[-1,1]$. We define the norm of $W_p^r(w_{\a,\b})$ by
$$
  \|f\|_{W_p^r(w_{\a,\b})} : =  \left( \sum_{k=0}^r \|f^{(k)}\|_{L^p(w_{\a,\b})}^p \right)^{1/p}.
$$
For $n \in \NN$, let $\Pi_n$ denote the space of polynomials of degree at most $n$ in one variable. The standard 
error of best approximation by polynomials in $\Pi_n$ is defined by
$$
  E_n(f)_{L^p(w_{\a,\b})} : = \inf_{p \in \Pi_n} \|f - p\|_{L^p(w_{\a,\b})}. 
$$
The characterization of this quantity via an appropriate modulus of smoothness lies in the center of Approximation 
Theory and is widely studied; see, for example, \cite{DL, DT}. For $p=2$, the $n$-th partial sum $S_n^{\a,\b} f$
of the Fourier-Jacobi series satisfies 
\begin{equation}\label{eq:least}
   E_n(f)_{L^2(w_{\a,\b})} = \|f-S_n^{\a,\b} f\|_{L^2(w_{\a,\b})}. 
\end{equation}
However, using $S_n^{\a,\b} f$ for approximation in $W_2^r (w_{\a,\b})$ gives a much weaker result than optimal 
(cf. \cite{CQ, Guo}), which will be discussed in Section 2 below.  

Throughout this paper, we denote by $c$ a generic constant, independent of $n$, whose value may vary from 
line by line. We prove two types of results for approximation in the Sobolev space. 

\begin{thm}\label{thm:1.1}
Let $\a,\b > -1$. Assume $f\in W^s_p(w_{\a,\b})$ if $1 \le p < \infty$ or $f \in C^s[-1,1]$ if $ p =\infty$. Then there exists 
a polynomial $p_n \in \Pi_{2n}$ such that 
\begin{equation}\label{eq:thm1.1}
  \|f - p_n\|_{W_p^s(w_{\a,\b})} \le c\, E_n(f^{(s)})_{L^p(w_{\a,\b})}, \quad 1\le p \le \infty.
\end{equation}
\end{thm}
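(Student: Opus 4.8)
The plan is to peel off the $s$-th derivative and reduce the simultaneous estimate to a single weighted Hardy inequality. Let $q_n\in\Pi_n$ be a polynomial of best approximation to $f^{(s)}$ in $L^p(w_{\a,\b})$, so that $\|f^{(s)}-q_n\|_{L^p(w_{\a,\b})}=E_n(f^{(s)})_{L^p(w_{\a,\b})}$, and set $h:=f^{(s)}-q_n$. Fixing the interior base point $x_0=0$, I would define
$$
 p_n(x)=\sum_{k=0}^{s-1}\frac{f^{(k)}(0)}{k!}\,x^k+\frac{1}{(s-1)!}\int_0^x(x-t)^{s-1}q_n(t)\,dt .
$$
The integral term is an $s$-fold antiderivative of $q_n$, hence a polynomial of degree $n+s$, while the first sum has degree at most $s-1$; thus $p_n\in\Pi_{n+s}\subseteq\Pi_{2n}$ once $n\ge s$. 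By construction $p_n^{(s)}=q_n$ and $p_n^{(k)}(0)=f^{(k)}(0)$ for $0\le k\le s-1$, the latter using $f\in C^{s-1}[-1,1]$.

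Writing $\phi:=f-p_n$, we then have $\phi^{(s)}=h$ and $\phi^{(k)}(0)=0$ for $0\le k\le s-1$. Taylor's formula with integral remainder, applicable since $f^{(s-1)}$ is absolutely continuous with $f^{(s)}\in L^1(-1,1)$, yields for $0\le k\le s-1$
$$
 \phi^{(k)}(x)=\frac{1}{(s-1-k)!}\int_0^x(x-t)^{s-1-k}h(t)\,dt .
$$
Since $\|\phi^{(s)}\|_{L^p(w_{\a,\b})}=\|h\|_{L^p(w_{\a,\b})}=E_n(f^{(s)})_{L^p(w_{\a,\b})}$, the theorem reduces to proving, for $j:=s-k\in\{1,\dots,s\}$, the estimate
$$
 \left\|\int_0^x(x-t)^{j-1}h(t)\,dt\right\|_{L^p(w_{\a,\b})}\le c\,\|h\|_{L^p(w_{\a,\b})}
$$
with $c$ independent of $h$, and hence of $n$. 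The pointwise domination $\big|\int_0^x(x-t)^{j-1}h\,dt\big|\le 2^{j-1}\big|\int_0^x|h(t)|\,dt\big|$ reduces every $j\ge2$ to the plain Hardy operator $h\mapsto\int_0^x h(t)\,dt$ applied to $|h|$, so it suffices to settle the case $j=1$.

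The crux, which I expect to be the main obstacle, is this weighted Hardy inequality for the full Jacobi weight, uniformly in $\a,\b>-1$ and $1\le p\le\infty$. Splitting the integral at $0$ and using the symmetry $x\mapsto-x$ reduces it to a one-sided operator near $x=1$ against $(1-x)^\a$; the substitution $\xi=1-x$ turns this into the dual Hardy operator $g\mapsto\int_\xi^{1/2}g$ on $(0,1/2)$ with weight $\xi^\a$ on both sides. For $1<p<\infty$ its boundedness on $L^p(\xi^\a\,d\xi)$ is equivalent to the Muckenhoupt condition
$$
 \sup_{0<r<1/2}\left(\int_0^r\xi^\a\,d\xi\right)^{1/p}\left(\int_r^{1/2}\xi^{-\a/(p-1)}\,d\xi\right)^{1/p'}<\infty .
$$
A direct computation shows this supremum is finite for every $\a>-1$: as $r\to0^+$ the first factor is of order $r^{(\a+1)/p}$, and in the delicate regime $\a\ge p-1$ the (divergent) second factor is of order $r^{(p-1-\a)/p}$, so that the two combine to a product of exact order $r$ (with a logarithmic correction when $\a=p-1$), while for $\a<p-1$ the second factor stays bounded and the product again tends to $0$; since it also vanishes as $r\to\tfrac12$, the supremum is finite. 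The cases $p=1$ and $p=\infty$ are immediate, the former by Fubini together with $\int_0^\tau\xi^\a\,d\xi\le c\,\tau^\a$ on $(0,\tfrac12)$, the latter from $\big|\int_0^x h\big|\le2\|h\|_\infty$. Granting this inequality and summing the derivative estimates over $0\le k\le s$ gives
$$
 \|f-p_n\|_{W_p^s(w_{\a,\b})}\le c\,E_n(f^{(s)})_{L^p(w_{\a,\b})},
$$
which is \eqref{eq:thm1.1}.
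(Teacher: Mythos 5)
Your proposal is correct and follows essentially the same route as the paper's own proof (the Section 4 theorem applied with an interior base point $\t\in(-1,1)$): you build the approximant as a Taylor polynomial at an interior point plus an $s$-fold antiderivative of an approximation to $f^{(s)}$, so that the error in the lower derivatives reduces to the $L^p(w_{\a,\b})$-boundedness of the antiderivative operator anchored at that interior point, which indeed holds for all $\a,\b>-1$. The only cosmetic differences are that you integrate a best approximation $q_n$ (giving degree $n+s$, valid once $n\ge s$) where the paper integrates the near-best mean $V_n^{\a,\b}f^{(s)}$ (degree $2n+s$, chosen because the paper wants an explicit linear operator for its later results), and that you verify the interior-point Hardy inequality via the Muckenhoupt criterion where the paper uses a direct H\"older computation.
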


\begin{thm}\label{thm:1.2}
Let $\a,\b > -1$. Assume $f\in W^s_p(w_{\a,\b})$ if $1 \le p < \infty$ or $f \in C^s[-1,1]$ if $ p =\infty$. Then there exists a polynomial $p_n \in \Pi_{2n}$ such that 
\begin{equation}\label{eq:thm1.2}
  \|f^{(k)} - p_n^{(k)}\|_{L^p(w_{\a,\b})} \le c\, n^{-s+k} E_n(f^{(s)})_{L^p(w_{\a,\b})}, \quad 1\le p \le \infty,
\end{equation}
provided either $\a =0$ or $\b =0$. 
\end{thm}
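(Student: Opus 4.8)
The plan is to reduce at once to the case $\beta = 0$: the reflection $x \mapsto -x$ sends $w_{\alpha,\beta}$ to $w_{\beta,\alpha}$ and preserves all the norms in sight, so the case $\alpha = 0$ follows from the case $\beta = 0$. With $\beta = 0$ the weight $w_{\alpha,0}(x) = (1-x)^\alpha$ is bounded above and below near the endpoint $x = -1$, which will be the point from which I integrate. The case $k = s$ is exactly Theorem~\ref{thm:1.1} (there $n^{-s+k} = 1$), so it remains to produce the extra decay $n^{-(s-k)}$ for $0 \le k < s$. I would construct $p_n$ directly. Let $g := f^{(s)}$ and let $q_n = V_n g \in \Pi_n$ be the image of a de la Vall\'ee Poussin--type operator for Jacobi expansions, i.e. a smooth frequency cutoff reproducing $\Pi_n$ and bounded on every $L^p(w_{\alpha,0})$, so that $\|g - q_n\|_{L^p(w_{\alpha,0})} \le c\,E_n(g)_{L^p(w_{\alpha,0})}$. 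Define $p_n$ to be the $s$-fold antiderivative of $q_n$ from $x = -1$, with the $s$ integration constants fixed by $p_n^{(j)}(-1) = f^{(j)}(-1)$ for $0 \le j \le s-1$. Then $p_n \in \Pi_{n+s} \subseteq \Pi_{2n}$ (for $n \ge s$), $p_n^{(s)} = q_n$, and Taylor's formula with integral remainder gives, for $0 \le k \le s-1$, the identity $f^{(k)}(x) - p_n^{(k)}(x) = \frac{1}{(s-k-1)!}\int_{-1}^x (x-t)^{s-k-1}\bigl(g(t) - q_n(t)\bigr)\,dt = I^{s-k}(g - q_n)(x)$, where $I h(x) := \int_{-1}^x h$.

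The key point is that one cannot bound $\|I^{s-k}(g-q_n)\|$ by $\|g - q_n\|$ with any gain directly, since $I$ is merely bounded and not smoothing on arbitrary functions; the gain must come from the fact that $g - q_n$ is a purely high-frequency object. Writing $V_{2^j}$ for the cutoff at level $2^j$ and taking $j_0 := \min\{j : 2^j \ge n\}$, I would decompose $g - q_n = \sum_{j > j_0}\eta_j$ into blocks $\eta_j := (V_{2^j} - V_{2^{j-1}})g \in \Pi_{2^{j+1}}$, whose Jacobi coefficients are supported at frequencies $\asymp 2^j$ and which satisfy $\|\eta_j\|_{L^p(w_{\alpha,0})} \le c\,E_{2^{j-1}}(g)_{L^p(w_{\alpha,0})} \le c\,E_n(g)_{L^p(w_{\alpha,0})}$. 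The crucial input is then the single-block gain $\|I^{s-k}\eta_j\|_{L^p(w_{\alpha,0})} \le c\,2^{-j(s-k)}\|\eta_j\|_{L^p(w_{\alpha,0})}$. Granting this, the triangle inequality and a geometric sum finish the argument: $\|f^{(k)} - p_n^{(k)}\|_{L^p(w_{\alpha,0})} \le \sum_{j>j_0} c\,2^{-j(s-k)}\|\eta_j\|_{L^p(w_{\alpha,0})} \le c\,E_n(g)_{L^p(w_{\alpha,0})}\sum_{j>j_0}2^{-j(s-k)} \le c\,n^{-(s-k)}E_n(f^{(s)})_{L^p(w_{\alpha,0})}$, which is the claimed estimate \eqref{eq:thm1.2}.

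The main obstacle is the single-block gain $\|I^{s-k}\eta_j\| \le c\,2^{-j(s-k)}\|\eta_j\|$. I would derive it from the Jacobi differentiation identity $\frac{d}{dx}P_m^{(\alpha,\beta)} = \tfrac{m+\alpha+\beta+1}{2}P_{m-1}^{(\alpha+1,\beta+1)}$: antidifferentiation divides the frequency-$m$ component by a factor $\asymp m \asymp 2^j$ on the band where $\eta_j$ lives, so that $I\eta_j$ is, up to the additive constant absorbed by the base point $x=-1$, a Jacobi multiplier transform of $\eta_j$ with symbol of size $2^{-j}$ on that band, and iterating $s-k$ times produces $2^{-j(s-k)}$. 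Turning this frequency-side decay into the weighted $L^p$ gain for $p \ne 2$ requires the $L^p(w_{\alpha,0})$-boundedness of the resulting Jacobi multiplier/Hardy operator, and this is precisely where the hypothesis enters: antidifferentiation lowers the Jacobi parameters toward $(\alpha-1,\beta-1)$, and $\beta = 0$ places the base point at the endpoint $x=-1$ where the weight is regular, so that $I=\int_{-1}^{\cdot}$ is bounded and the degenerate factor $(1+x)^{\beta}$ does not obstruct the estimate. When both $\alpha,\beta>0$ neither endpoint is regular, there is no good base point from which to integrate, and exactly this gain is lost — which explains why the sharp rate is asserted only when $\alpha=0$ or $\beta=0$. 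The remaining points are routine: the $L^p(w_{\alpha,0})$-boundedness and near-best property of $V_n$ (standard Jacobi harmonic analysis, as used for Theorem~\ref{thm:1.1}), the block bound $\|\eta_j\| \le c\,E_{2^{j-1}}(g)$, and the endpoint cases $p=1,\infty$, all handled by the same operators.
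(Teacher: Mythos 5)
Your construction of $p_n$ is exactly the paper's polynomial $\CV_{n,-1}^{\alpha-s,\beta-s}f$ from \eqref{eq:approxSP-b} (Taylor data at $\tau=-1$ plus the $s$-fold antiderivative of $V_n^{\alpha,0}f^{(s)}$), and your reduction of \eqref{eq:thm1.2} to bounding $I^{s-k}(g-V_n g)$ is also the paper's starting point; the dyadic blocks and the bound $\|\eta_j\|_{L^p(w_{\alpha,0})}\le c\,E_{2^{j-1}}(g)_{L^p(w_{\alpha,0})}$ are fine. The gap is the step you yourself call crucial: the single-block gain $\|I^{s-k}\eta_j\|_{L^p(w_{\alpha,0})}\le c\,2^{-j(s-k)}\|\eta_j\|_{L^p(w_{\alpha,0})}$, whose proposed justification does not work as stated. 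Antidifferentiation is \emph{not} a Jacobi multiplier transform: the identity \eqref{eq:diffJ} connects the $(\alpha,\beta)$ system to the $(\alpha+1,\beta+1)$ system, so the antiderivative of $J_m^{\alpha,0}$ is $J_{m+1}^{\alpha-1,-1}$, a member of a \emph{different} orthogonal family. Re-expanding it in the original family via the two-term relations \eqref{eq:JacobiR1} gives $J_{m+1}^{\alpha,0}+O(m^{-2})J_m^{\alpha,0}+O(m^{-2})J_{m-1}^{\alpha,0}$, with leading coefficient $1$, not $O(2^{-j})$; the $2^{-j}$ gain appears only after renormalizing by the $L^2$ norms, since $(h_{m+1}^{\alpha,0}/h_m^{\alpha,0})^{1/2}\sim 1/(4m)$. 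In other words, in the orthonormal basis the operator is a \emph{weighted shift}, not a multiplier with small symbol. For $p=2$ the block gain then follows at once from Parseval, but for $p\neq 2$ (and in particular at $p=1,\infty$, which the theorem includes) the $L^p(w_{\alpha,0})$-boundedness of such degree-shifting, parameter-changing operators is not covered by standard Jacobi multiplier theory — it is precisely the hard analytic content of the theorem, which your outline assumes. Two further points are asserted but not verified: the integration constant is "absorbed" only because $J_{m+1}^{\alpha-1,-1}(-1)=0$, a special property of $\beta=0$ with base point $-1$ (for $\beta\neq 0$ this constant is nonzero and destroys the gain, which is the real reason for the hypothesis); and the $k=s$ case via Theorem \ref{thm:1.1} needs the $\tau=-1$ version of \eqref{eq:approxSP}, i.e.\ condition (2) there, which does hold when $\beta=0$.

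If you try to prove the block gain rigorously, the natural route is duality: pair $I^{s-k}\eta_j$ with $h\in L^q(w_{\alpha,0})$, integrate by parts to move the $(s-k)$-fold integration onto $h$, use that $\eta_j$ is orthogonal to $\Pi_{2^{j-1}}$ to subtract a near-best approximation of the transferred function, and finish with the Jackson inequality \eqref{eq:Jackson2}. Doing so forces you to bound derivatives of $w_{\alpha,\beta}(x)^{-1}\int_x^1 h\,w_{\alpha,\beta}\,dy$ in $L^q(w_{\alpha,\beta})$, which is exactly the paper's estimate \eqref{eq:ug<g}, and is where $\beta=0$ (or $s=1$ with restricted $\beta$) genuinely enters. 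This is the paper's Aubin--Nitsche argument (Lemma \ref{lem:int-g} together with \eqref{eq:ug<g}); moreover, once that machinery is available the dyadic decomposition is superfluous, since the same argument applies in one stroke to $g-V_ng$, which is itself orthogonal to $\Pi_n$. So as written, your proof either rests on an unproven key lemma or, when that lemma is proved honestly, collapses into the paper's own argument.
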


Evidently, the estimate \eqref{eq:thm1.2} is stronger than the estimate \eqref{eq:thm1.1} but it holds under more 
restrictive conditions. Moreover, \eqref{eq:thm1.2} is sharp; in fact, the order of the estimate is sharp for each 
fixed $k$. 

The estimate \eqref{eq:thm1.1} provides a sharp estimate for the error of best polynomial 
approximation in the Sobolev norm. It is known \cite{MT} that, for $r \in \NN$, 
\begin{equation}\label{eq:Jackson}
  E_n(f)_{L^p(w_{\a,\b})} \le c\, n^{-r} \|\phi^r f^{(r)}\|_{L^p(w_{\a,\b})}, \qquad \phi(x):=\sqrt{1-x^2},
\end{equation}
so that the righthand side of \eqref{eq:thm1.1} and \eqref{eq:thm1.2} can be stated with $E_n(f^{(s)})_{L^p(w_{\a,\b})}$ 
replaced by $n^{-r+s} \|f^{(r)}\|_{L^p(w_{\a,\b})}$. In the case $p=2$, estimates in the form 
\begin{equation} \label{eq:old}
   \|f - p_n \|_{W_2^s(w_{\a,\b})} \le c n^{-r+s} \|f^{(r)}\|_{L^2(w_{\a,\b})}, \qquad 0 \le s \le r,
\end{equation}
have been established and used in the spectral method for numerical solution of differential equations in some cases; 
more precisely, such an estimate was first established in \cite{CHQZ, CQ} for the case of Chebyshev and Legendre 
polynomials ($\a=\b = -1/2$ or $0$) when $s =1$, and later established for general $\a,\b$ and $s=1$ by several 
researchers, see \cite{GW, STW} and references therein. In latter works, the norm in the lefthand side of \eqref{eq:old} 
is often replaced by 
$$
  \|f \|_{W_2^s(w_{\a,\b})}^*:=  \left( \sum_{k=0}^s \|f^{(k)}\|_{L_2(w_{\a+k,\b+k})}^2 \right)^{1/2}
$$
and the norm in the righthand side is replaced by $\|f^{(r)}\|_{L^2(w_{\a+r,\b+r})}$, which we call $*$-version. 
By \eqref{eq:Jackson}, our estimates can be stated in terms of the norm of $f^{(r)}$ for $r \ge s$, so that \eqref{eq:thm1.1}
is stronger than \eqref{eq:old} and offers an estimate somewhat different from \eqref{eq:old} in $*$-version. 

The estimate \eqref{eq:thm1.2} is what is known as simultaneous approximation in the Approximation Theory community,
where it is a folklore that each increased derivative reduces the order of approximation by $n^{-1}$. However, such 
estimates are usually established with a modulus of smoothness of $f^{(r)}$ in place of $E_n(f^{(r)})_{L^p(w_{\a,\b})}$ 
in \eqref{eq:thm1.2} in the literature (cf. \cite{Kilgore, K}). 

The polynomial $p_n$ in the theorems can be expressed in a simple explicit formula. For $p=2$, it is the $n$-th partial 
sum operator of the Fourier series in the Sobolev orthogonal polynomials in $W_2^s(w_{\a,\b})$, which are polynomials
that are orthogonal with respect to an inner product that involves derivatives. It should be pointed out, however, that this 
fact does not follow from the usual Hilbert space argument, since the norm of $W_2^s(w_{\a,\b})$ is not arising from the 
square root of the inner product that defines the Sobolev orthogonality. Sobolev orthogonal polynomials have been 
studied extensively in the special function community (cf. \cite{MX} and the references therein), but not their orthogonal 
series, and in spectral method community, often with zero boundary at the end of the intervals. These polynomials are 
usually given in terms of the Jacobi polynomials with negative integer parameters, which requires appropriate extensions 
that could be rather delicate (cf. \cite{AAR02, GW, APP98, LX, X16}). We shall give a more direct definition of the 
Sobolev orthogonal polynomials in $W_2^s(w_{\a,\b})$ that does not require such extensions.  

The paper is organized as follows. In the following section, we discuss the approximation behavior of the $L^2$ partial 
sum operator $S_n^{\a,\b} f$ in the Sobolev space, which gives suboptimal result. In Section 3, we consider a Sobolev
inner product in $W_2^r(w_{\a,b})$ and define a family of its orthogonal polynomials, for all $\a,\b > -1$, in an elegant
formula that is more suitable for studying orthogonal series in terms of them. Approximation by polynomials or orthogonal 
series are studied in the following two sections. In particular, more elaborate version of Theorems \ref{eq:thm1.1} and 
\ref{eq:thm1.2} will be established in Section 4 and Section 5, respectively.  

\section{Jacobi polynomials and Fourier Jacobi series} \label{sect2}
\setcounter{equation}{0}

For $\a,\b>-1$, the Jacobi polynomials are defined by \cite[(4.21.2)]{Sz},
\begin{equation}\label{eq:JacobiP}
  P_n^{(\a,\b)}(t) = \frac{(\a+1)_n}{n!} {}_2F_1 \left(\begin{matrix} -n, n+\a + \b+1\\ \a+1\end{matrix}; \frac{1-t}{2} \right)
\end{equation}
in terms of the hypergeometric function ${}_2F_1$. For convenience, we shall define 
\begin{equation*}
  J^{\alpha,\beta}_{n}(t) =  \frac{2^n}{(n+\a+\beta+1)_{n}}{P}^{(\alpha,\beta)}_{n}(t).
\end{equation*}
One advantage of this normalization is the following identity, by \cite[(4.5.5)]{Sz},  
\begin{align} \label{eq:diffJ}
  \frac{d}{dt} J_n^{\a,\b}(t) =  J_{n-1}^{\a+1,\b+1}(t). 
\end{align}
These polynomials are orthogonal with respect to the inner product
$$
  \la f,g\ra_{\a,\b} := \int_{-1}^1 f(x) g(x) w_{\a,\b}(x) dx. 
$$
The Fourier orthogonal expansion of $f$ in $L^2(w_{\a,\b})$ is defined by 
$$
   f = \sum_{n=0}^\infty \wh f_n^{\a,\b}  J_n^{\a,\b}, \quad\hbox{where}\quad 
      \wh f_n^{\a,\b} = \frac{\la f, J_n^{\a,\b}\ra_{\a,\b}}{h_n^{\a,\b}}
$$
and $h_n^{\a,\b}:= \la J_n^{\a,\b}, J_n^{\a,\b}\ra_{\a,\b}$ is given by \cite[(4.3.3)]{Sz}
\begin{equation} \label{eq:h_n}
  h_n^{\a,\b} =  \frac{2^{\alpha+\beta+1}}{2n+\alpha+\beta+1} 
    \frac{\Gamma(n+\alpha+1)\,\Gamma(n+\beta+1)\Gamma(n+\alpha+\beta+1)}{n!\,\Gamma(2n+\alpha+\beta+1)^2}.
\end{equation}
The $n$-th partial sum of this expansion, defined by 
$$
  S_n^{\a,\b} f(x) = \sum_{k=0}^n \wh f_k^{\a,\b} J_k^{\a,\b}(x),
$$
is the least square polynomial of degree $n$, that is, \eqref{eq:least} holds. The operator $S_n^{\a,\b}$ can be written 
as a linear integral operator. 

For approximation in $L^p(w_{\a,\b})$, $p\ne 2$, we can define a near best approximation 
operator as follows. We call $\eta$ an admissible function if $\eta$ is a $C^\infty$ function on $\RR_+$ satisfying 
$\eta(t) = 1$ for $0\le t \le 1$ and $\eta(t) =0$ for $t \ge 2$. For an admissible $\eta$, define 
$$
    V_n^{\a,\b} f: = \sum_{k=0}^{2n} \eta\left(\frac{k}{n} \right) \wh f_k^{\a,\b} J_k^{\a,\b}.
$$
It is well known that $V_{n}^{\a,\b}$ defines a bounded linear operator in $L^p(w_{\a,\b})$ for $1 \le p \le \infty$ and 
it preserves polynomials up to degree $n$, that is, $V_{n}^{\a,b} f = f$ for $f \in \Pi_n$; consequently, the following 
theorem holds (see, for example, \cite{X05})

\begin{thm} 
Let $\a,\b > -1$. For $f\in L^p(w_{\a,\b})$ if $1 \le p < \infty$, or $f \in C[-1,1]$ if $p = \infty$, 
\begin{equation} \label{eq:Jacobi-near-best}
     \|f -  V_{n}^{\a,\b} f \|_{L^p(w_{\a,\b})} \le c \, E_n(f)_{L^p(w_{\a,\b})}, \qquad 1 \le p \le \infty.
\end{equation}
\end{thm}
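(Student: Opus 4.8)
The plan is to deduce \eqref{eq:Jacobi-near-best} from the two stated properties of $V_n^{\a,\b}$ --- its uniform boundedness on $L^p(w_{\a,\b})$ and its reproduction of polynomials of degree at most $n$ --- by the classical Lebesgue-inequality argument. First I would fix $n$ and choose a best approximant $p^* \in \Pi_n$ to $f$, so that $\|f - p^*\|_{L^p(w_{\a,\b})} = E_n(f)_{L^p(w_{\a,\b})}$; such a $p^*$ exists since $\Pi_n$ is finite dimensional. The key observation is that, because $V_n^{\a,\b}$ is linear and reproduces $p^*$, the error decomposes as
$$
f - V_n^{\a,\b} f = (f - p^*) - V_n^{\a,\b}(f - p^*) = (I - V_n^{\a,\b})(f - p^*),
$$
so the polynomial part is annihilated and only the best-approximation remainder $f - p^*$ survives.

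Taking the $L^p(w_{\a,\b})$ norm of both sides and applying the triangle inequality together with the uniform operator bound $\|V_n^{\a,\b} g\|_{L^p(w_{\a,\b})} \le c\, \|g\|_{L^p(w_{\a,\b})}$, valid for all $n$ and all $1 \le p \le \infty$, then yields
$$
\|f - V_n^{\a,\b} f\|_{L^p(w_{\a,\b})} \le (1 + c)\, \|f - p^*\|_{L^p(w_{\a,\b})} = c\, E_n(f)_{L^p(w_{\a,\b})},
$$
which is exactly the claimed estimate. This reduction is entirely routine; the whole weight of the theorem rests on the uniform boundedness of $V_n^{\a,\b}$, and that is the step I would expect to be the genuine difficulty were it not already available from \cite{X05}.

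To prove that boundedness from scratch I would realize $V_n^{\a,\b}$ as an integral operator with kernel $\sum_k \eta(k/n)\, J_k^{\a,\b}(x)\, J_k^{\a,\b}(y)/h_k^{\a,\b}$ and reduce the $L^p$ bound, uniformly in $n$, to a suitable weighted $L^1$ estimate on this kernel. Since $\eta$ is smooth and compactly supported, repeated summation by parts converts the smoothness of $\eta$ into rapid off-diagonal decay of the kernel, which is the mechanism behind its uniform boundedness. The delicate point is the behaviour near the endpoints $\pm 1$, where the Jacobi weight degenerates and one must use sharp pointwise bounds for $J_k^{\a,\b}$ rather than crude Christoffel--Darboux estimates. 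As the paper treats this boundedness as known, I would simply invoke it and present only the short reduction above.
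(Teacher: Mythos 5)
Your argument is exactly the paper's: the theorem is stated as an immediate consequence of the uniform $L^p(w_{\a,\b})$-boundedness of $V_n^{\a,\b}$ and its reproduction of $\Pi_n$ (both quoted from \cite{X05}), via the standard Lebesgue-inequality decomposition $f - V_n^{\a,\b}f = (I - V_n^{\a,\b})(f - p^*)$ that you spell out. Your proposal is correct and matches the paper's approach, merely making explicit the "consequently" step the paper leaves to the reader.
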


We will also need the Jackson type estimate for the error of best approximation. Let 
$\phi(x):= \sqrt{1-x^2}$. The following theorem was established in \cite{MT}. 

\begin{thm} \label{thm:Jackson}
Let $\a,\b > -1$. For $f \in W_p^r(w_{\a,\b})$ if $1 \le p < \infty$, or $f \in C^r[-1,1]$ if $p = \infty$,  
\begin{equation} \label{eq:Jackson2}
   E_n(f)_{L^p(w_{\a,\b})} \le c\, n^{-r} \|\phi^r f^{(r)}\|_{L^p(w_{\a,\b})}, \qquad 1 \le p \le \infty. 
\end{equation}
\end{thm}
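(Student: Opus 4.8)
The plan is to derive this weighted Jackson inequality from the equivalence between the error of best approximation and a weighted Ditzian--Totik modulus of smoothness, so that everything reduces to a trivial estimate for the associated $K$-functional. For $p=2$ one could argue directly from the coefficients, using the iterated differentiation identity \eqref{eq:diffJ} to write $f^{(r)}=\sum_{k\ge r}\wh f_k^{\a,\b}J_{k-r}^{\a+r,\b+r}$ together with $\phi^{2r}w_{\a,\b}=w_{\a+r,\b+r}$ and the asymptotics of $h_n^{\a,\b}$ in \eqref{eq:h_n}; but since the claim covers all $1\le p\le\infty$, the modulus-of-smoothness route is the natural one.

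First I would introduce the $r$-th weighted Ditzian--Totik modulus $\omega_\phi^r(f,t)_{L^p(w_{\a,\b})}$, built from symmetric differences $\Delta^r_{h\phi}f$ with the variable step $h\phi(x)$ and the customary modification near the endpoints, as in \cite{DT} adapted to the Jacobi weight. The first deep ingredient is the direct (Jackson) theorem of the weighted theory,
\begin{equation*}
  E_n(f)_{L^p(w_{\a,\b})} \le c\, \omega_\phi^r\Big(f, \tfrac1n\Big)_{L^p(w_{\a,\b})}, \qquad 1 \le p \le \infty,
\end{equation*}
valid for all $\a,\b>-1$, which is the content of \cite{MT}. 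The second deep ingredient is the characterization of the modulus by the $K$-functional
\begin{equation*}
  K_{r,\phi}(f,t^r)_{L^p(w_{\a,\b})} := \inf_{g} \Big\{ \|f-g\|_{L^p(w_{\a,\b})} + t^r \|\phi^r g^{(r)}\|_{L^p(w_{\a,\b})} \Big\},
\end{equation*}
the infimum being over $g$ whose $(r-1)$-st derivative is locally absolutely continuous with $\phi^r g^{(r)}\in L^p(w_{\a,\b})$; one has $\omega_\phi^r(f,t)_{L^p(w_{\a,\b})} \sim K_{r,\phi}(f,t^r)_{L^p(w_{\a,\b})}$ with equivalence constants independent of $f$ and $t$.

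Granting these two facts, the inequality follows at once. Since $f\in W_p^r(w_{\a,\b})$ (or $f\in C^r[-1,1]$ when $p=\infty$), $f$ itself is admissible in the infimum, and because $\phi\le 1$ on $[-1,1]$ we have $\|\phi^r f^{(r)}\|_{L^p(w_{\a,\b})}\le\|f^{(r)}\|_{L^p(w_{\a,\b})}<\infty$. Choosing $g=f$ and $t=1/n$ therefore gives
\begin{equation*}
  K_{r,\phi}\Big(f,\tfrac{1}{n^r}\Big)_{L^p(w_{\a,\b})} \le n^{-r} \|\phi^r f^{(r)}\|_{L^p(w_{\a,\b})},
\end{equation*}
and combining this with the equivalence and the direct theorem yields
\begin{equation*}
  E_n(f)_{L^p(w_{\a,\b})} \le c\, \omega_\phi^r\Big(f,\tfrac1n\Big)_{L^p(w_{\a,\b})} \le c\, K_{r,\phi}\Big(f,\tfrac1{n^r}\Big)_{L^p(w_{\a,\b})} \le c\, n^{-r}\|\phi^r f^{(r)}\|_{L^p(w_{\a,\b})},
\end{equation*}
which is \eqref{eq:Jackson2}.

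The genuinely hard part is not this final bookkeeping but the two cited ingredients. Both require the full apparatus of weighted moduli of smoothness, and in particular the delicate treatment of the endpoint behavior encoded by the step $h\phi(x)$, which shrinks near $\pm1$ and is precisely what makes the factor $\phi^r$, rather than the bare $f^{(r)}$, the correct quantity on the right-hand side. Since the weighted Jackson theorem and the $K$-functional equivalence are established in \cite{DT,MT} for the Jacobi weight and all $\a,\b>-1$, $1\le p\le\infty$, I would simply quote them and present the short reduction above.
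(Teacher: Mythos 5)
The paper gives no proof of this theorem: it is quoted directly from \cite{MT}, so there is no internal argument to compare against. Your reduction---the weighted Jackson estimate $E_n(f)_{L^p(w_{\a,\b})}\le c\,\omega_\phi^r(f,1/n)_{L^p(w_{\a,\b})}$ from \cite{MT}, the equivalence of that modulus with the $K$-functional from \cite{DT,MT}, and then the trivial choice $g=f$, $t=1/n$ in the infimum---is the standard and correct way to extract the stated inequality from that literature, so in substance your proof and the paper's rest on exactly the same cited results.
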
 

In the rest of this section, we consider the approximation behavior of $S_n^{\a,\b} f$ in $L^2(w_{\a,\b})$ and in
$W_2^r(w_{\a,\b})$. Some of the results below are no doubt known but they provide contrast to our latter development 
and our proof is simple. We start with a lemma that is suggestive for our later study. Let $\partial$ denote the differential
operator. 

\begin{lem}
Let $\a,\b >-1$. For $n =1,2,\ldots$, 
\begin{equation} \label{eq:DSn=Sn}
   \partial S_n^{\a,\b} f = S_{n-1}^{\a+1,\b+1} (\partial f). 
\end{equation}
\end{lem}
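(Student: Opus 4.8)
The plan is to prove the operator identity $\partial S_n^{\a,\b} f = S_{n-1}^{\a+1,\b+1}(\partial f)$ by applying both sides to the Jacobi basis $\{J_k^{\a,\b}\}$ and verifying equality coefficient by coefficient. Since $S_n^{\a,\b}$ is a finite-rank linear operator acting through the Fourier-Jacobi coefficients, it suffices by linearity to check the identity when $f = J_k^{\a,\b}$ for each $k$. The two sides are then polynomials, and equality on the basis propagates to all $f$ for which both sides are defined.

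First I would compute the left-hand side on a basis element. For $f = J_k^{\a,\b}$, orthogonality gives $S_n^{\a,\b} J_k^{\a,\b} = J_k^{\a,\b}$ when $k \le n$ and $S_n^{\a,\b} J_k^{\a,\b} = 0$ when $k > n$. Differentiating and invoking the key derivative formula \eqref{eq:diffJ}, namely $\tfrac{d}{dt} J_k^{\a,\b} = J_{k-1}^{\a+1,\b+1}$, yields $\partial S_n^{\a,\b} J_k^{\a,\b} = J_{k-1}^{\a+1,\b+1}$ for $1 \le k \le n$ and $0$ otherwise (the $k=0$ term vanishes since $J_0$ is constant).

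Next I would compute the right-hand side on the same basis element. By \eqref{eq:diffJ} we have $\partial J_k^{\a,\b} = J_{k-1}^{\a+1,\b+1}$, which is itself a member of the orthogonal family for the shifted parameters $(\a+1,\b+1)$. Applying $S_{n-1}^{\a+1,\b+1}$ and using orthogonality in the $(\a+1,\b+1)$ system, I get $S_{n-1}^{\a+1,\b+1} J_{k-1}^{\a+1,\b+1} = J_{k-1}^{\a+1,\b+1}$ precisely when $k-1 \le n-1$, i.e. $k \le n$, and $0$ otherwise. Comparing with the left-hand side computation, the two agree for every $k$, which establishes the identity.

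The main conceptual point — the closest thing to an obstacle — is recognizing that the derivative formula \eqref{eq:diffJ} does a double duty here: it both converts the differentiation of $S_n^{\a,\b} f$ into the shifted family and guarantees that differentiation maps the $(\a,\b)$-orthogonal system exactly onto the $(\a+1,\b+1)$-orthogonal system with the index shifted down by one. This clean correspondence of indices is what makes the truncation levels match ($n$ on the left becoming $n-1$ on the right). I would be careful to confirm that the index bookkeeping is exact at the boundary $k=n$ and at $k=0$, since an off-by-one error there would break the identity; but both endpoints check out directly from the computations above.
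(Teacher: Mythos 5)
Your basis computation is correct and the index bookkeeping does check out: for $f = J_k^{\a,\b}$ both sides equal $J_{k-1}^{\a+1,\b+1}$ when $1\le k\le n$ and vanish otherwise, so \eqref{eq:DSn=Sn} holds for every polynomial $f$. The genuine gap is the sentence ``equality on the basis propagates to all $f$ for which both sides are defined.'' Linearity propagates an identity only to \emph{finite} linear combinations of the $J_k^{\a,\b}$, i.e.\ to polynomials; the lemma is needed (and is used immediately afterwards in the paper) for general Sobolev-class $f$, which is an infinite series converging in $L^2(w_{\a,\b})$, not a finite combination. To pass from polynomials to such $f$ you need an analytic ingredient you never supply: either (i) density of polynomials in the space $\{f: f\in L^2(w_{\a,\b}),\, f'\in L^2(w_{\a+1,\b+1})\}$ together with continuity of \emph{both} sides of \eqref{eq:DSn=Sn} in the corresponding norm --- and note that the right-hand side depends on $f$ only through integrals of $f'$, so approximating $f$ in $L^2(w_{\a,\b})$ alone is not enough, you must also arrange $p_m'\to f'$ in the weighted norm --- or (ii) the coefficient identity $\wh{\partial f}_k^{\a+1,\b+1} = \wh f_{k+1}^{\a,\b}$ for all admissible $f$, which is \eqref{eq:whDf=whf}.

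The second route is exactly what the paper does: integrating by parts against the second-order differential equation satisfied by the Jacobi polynomials gives $\la f', J_k^{\a+1,\b+1}\ra_{\a+1,\b+1} = \l_{k+1}\la f, J_{k+1}^{\a,\b}\ra_{\a,\b}$, hence \eqref{eq:whDf=whf}, after which the lemma follows from \eqref{eq:diffJ} in two lines. Put differently, the paper proves the identity at the level of coefficient \emph{functionals}, which holds for every admissible $f$ at once, whereas you prove it at the level of basis \emph{vectors}, which only covers their span. The distinction is not pedantic here: the two sides of \eqref{eq:DSn=Sn} read off expansion coefficients in two different orthogonal systems, computed from $f$ on the left and from $f'$ on the right, and relating these for non-polynomial $f$ is precisely the content of the lemma. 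Your argument, as written, leaves that content unproved; it becomes complete once you either prove \eqref{eq:whDf=whf} (the paper's way) or carry out the density-plus-continuity step explicitly.
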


\begin{proof}
It is well--known that the Jacobi polynomials are eigenfunctions of a second order differential operator
\cite[(4.21.1]{Sz}
$$       
  [w_{\a,\b}(t)]^{-1} \frac{d}{dt} \left[ (1-t^2) w_{\a,\b}(t)\right]  \frac{d}{dt} J_n^{\a,\b} = - \l_n J_n^{\a,\b},
$$
where $\l_n = n(n+\a+\b+1)$. Integrating by parts and applying this identity, we obtain by \eqref{eq:diffJ} that 
\begin{align*}
  \la f', J_n^{\a+1,\b+1}\ra_{\a+1,\b+1} & = \int_{-1}^1  f'(t)  \left[w_{\a+1,\b+1}(t) \frac{d}{dt}  J_{n+1}^{\a,\b}(t) \right] dt \\
    & =  - \int_{-1}^1  f(t) \frac{d}{dt} \left[w_{\a+1,\b+1}(t) \frac{d}{dt}  J_{n+1}^{\a,\b}(t) \right] dt \\
    & =  \l_{n+1}  \int_{-1}^1  f(t) J_{n+1}^{\a,\b}(t)w_{\a,\b}(t)  dt  = \l_{n+1} \la f, J_{n+1}^{\a,\b} \ra_{\a,\b}. 
\end{align*}
Setting $f =  J_{n+1}^{\a,\b}(t)$, it follows readily from the definition that 
\begin{equation} \label{eq:whDf=whf}
   \wh{\partial f}_n^{\a+1,\b+1} = \wh f_{n+1}^{\a,\b}, \qquad n =0,1,2,\ldots. 
\end{equation}
Consequently, by  \eqref{eq:diffJ} again, we see that 
\begin{align*}
  \partial S_n^{\a,\b} f (t)= \sum_{k=1}^n \wh f_k^{\a,\b} J_{k-1}^{\a+1,\b+1}(t) = \sum_{k=0}^{n-1} \wh{\partial f}_k^{\a+1,\b+1} 
      J_{k}^{\a+1,\b+1}(t) = S_{n-1}^{\a+1,\b+1} f(t).
\end{align*}
This completes the proof. 
\end{proof}


\begin{thm} 
Let $\a,\b > -1$ and $r \in \NN$. For $f \in L^2(w_{\a,\b})$ such that $f^{(r)} \in L^2(w_{\a+r,\b+r})$,  
\begin{equation} \label{eq:bestapp}
    E_n(f)_{L^2(w_{\a,\b})} \le n^{-r} E_{n-r}(f^{(r)})_{L^2(w_{\a+r,\b+r})}.
\end{equation}
\end{thm}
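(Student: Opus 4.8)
The plan is to reduce the inequality to a pointwise comparison of the normalizing constants $h_k^{\a,\b}$. First I would invoke the least-squares identity \eqref{eq:least} together with orthogonality to express both errors as tails of their Fourier--Jacobi series:
\begin{align*}
   E_n(f)_{L^2(w_{\a,\b})}^2 &= \sum_{k=n+1}^\infty |\wh f_k^{\a,\b}|^2\, h_k^{\a,\b}, \\
   E_{n-r}(f^{(r)})_{L^2(w_{\a+r,\b+r})}^2 &= \sum_{m=n-r+1}^\infty |\wh{f^{(r)}}_m^{\a+r,\b+r}|^2\, h_m^{\a+r,\b+r}.
\end{align*}
Iterating the coefficient identity \eqref{eq:whDf=whf} $r$ times (legitimate under the stated regularity) yields $\wh{f^{(r)}}_m^{\a+r,\b+r} = \wh f_{m+r}^{\a,\b}$, so the substitution $k=m+r$ turns the second sum into $\sum_{k=n+1}^\infty |\wh f_k^{\a,\b}|^2 h_{k-r}^{\a+r,\b+r}$: the same coefficients over the same range. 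After squaring, the theorem therefore reduces to the termwise domination
\begin{equation*}
   h_k^{\a,\b} \le n^{-2r}\, h_{k-r}^{\a+r,\b+r}, \qquad k \ge n+1,\tag{$\ast$}
\end{equation*}
which I would prove for each $k$ separately.

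For $(\ast)$ I would use the explicit value \eqref{eq:h_n}: cancelling the common $\Gamma$- and power-factors leaves the clean ratio
\[
   \frac{h_{k-r}^{\a+r,\b+r}}{h_k^{\a,\b}} = 4^r\,(k-r+1)_r\,(k+\a+\b+1)_r ,
\]
with $(\cdot)_r$ the Pochhammer symbol, so $(\ast)$ becomes $4^r(k-r+1)_r(k+\a+\b+1)_r \ge n^{2r}$ for $k \ge n+1$. This is the step I expect to be the crux. The difficulty is that the $2r$ factors are very unbalanced: those coming from $(k-r+1)_r$ descend to $n-r+2$, which is $O(1)$ when $r$ is comparable to $n$, so no grouping into pairs each exceeding $n^2$ is possible, and the obvious induction on $r$ collapses at exactly the last steps, where the incremental ratio $4(n-r+2)(n+\a+\b+r+1)$ drops below $n^2$.

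I would resolve this by a concavity argument on the logarithm. Since the left-hand side is increasing in $k$, it suffices to take $k=n+1$, i.e. to show $G(r):=4^r(n-r+2)_r(n+\a+\b+2)_r \ge n^{2r}$ for $1\le r\le n$. Setting $\Sigma(r):=\log\big(G(r)/n^{2r}\big)=\sum_{\rho=1}^r g(\rho)$ with $g(\rho)=\log\!\big(4(n-\rho+2)(n+\a+\b+\rho+1)/n^2\big)$, a one-line check shows $g(\rho+1)\le g(\rho)$ precisely because $\a+\b+2\rho>0$. Hence the increments of $\Sigma$ decrease, $\Sigma$ is concave, and $\Sigma(0)=0$ forces $\Sigma(r)\ge \tfrac{r}{n}\,\Sigma(n)$. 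It then remains only to verify the single endpoint $\Sigma(n)\ge 0$: using $(n+\a+\b+2)_n\ge n^n$ (each factor exceeds $n$ since $\a+\b>-2$) and $(n+1)!\ge n!\ge (n/e)^n$, one gets $G(n)\ge 4^n (n/e)^n n^n=(4/e)^n n^{2n}>n^{2n}$. This yields $(\ast)$, and summing over $k\ge n+1$ gives \eqref{eq:bestapp}.
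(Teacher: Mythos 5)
Your outline is the same as the paper's (Parseval, the coefficient shift \eqref{eq:whDf=whf}, comparison of the constants $h_k$), and you correctly put your finger on the crux that the paper's proof waves away with ``iterating this inequality proves the stated result'': when $r$ is comparable to $n$ the descending factors degenerate and no pairwise or inductive bound gives $n^{2r}$. The concavity-of-increments device and the reduction to $k=n+1$ are fine. The fatal flaw is the factor $4^r$ in your ratio, on which your endpoint check entirely depends. It is an artifact of a typo in \eqref{eq:h_n}: with the normalization $J_n^{\a,\b}=\frac{2^n}{(n+\a+\b+1)_n}P_n^{(\a,\b)}$, the constant $h_n^{\a,\b}=\la J_n^{\a,\b},J_n^{\a,\b}\ra_{\a,\b}$ carries an extra factor $4^n$ not shown in \eqref{eq:h_n}. (Check $n=1$, $\a=\b=0$: $J_1^{0,0}(x)=x$, so $h_1^{0,0}=2/3$, while \eqref{eq:h_n} gives $1/6$. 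Equivalently, setting $f=J_{n+1}^{\a,\b}$ in the integration-by-parts identity used to derive \eqref{eq:whDf=whf} forces $h_n^{\a+1,\b+1}=(n+1)(n+\a+\b+2)\,h_{n+1}^{\a,\b}$, which is incompatible with \eqref{eq:h_n} as printed but holds once the $4^n$ is restored; this is the relation the paper's own proof uses.) Since your argument also relies on \eqref{eq:whDf=whf}, i.e.\ on this same normalization, you must use the consistent $h_n$, and then the $4^r$ cancels:
\[
 \frac{h_{k-r}^{\a+r,\b+r}}{h_k^{\a,\b}}=(k-r+1)_r\,(k+\a+\b+1)_r ,
\]
with no exponential factor.

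With the $4^r$ removed, your endpoint check fails and cannot be repaired: by Stirling, $G(n)=(n+1)!\,(n+\a+\b+2)_n \sim C\, n^{3/2}\,(4/e^2)^n\, n^{2n}$ and $4/e^2<1$, so $\Sigma(n)<0$ for all large $n$ and the concavity argument yields nothing. Worse, your reduction to the termwise bound is in fact an equivalence (test on $f=J_{n+1}^{\a,\b}$, which has a single nonzero coefficient and satisfies the hypotheses), so the failure of $(\ast)$ is a failure of the statement itself: for $\a=\b=0$ and $n=r=10$ one computes $\prod_{j=0}^{9}(11-j)(12+j)\approx 5.1\times 10^{19}<10^{20}=n^{2r}$, so $E_n(f)_{L^2(w_{\a,\b})}>n^{-r}E_{n-r}(f^{(r)})_{L^2(w_{\a+r,\b+r})}$ for this $f$. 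In other words, the theorem with constant exactly $1$ is false when $r$ is comparable to $n$; it does hold when $n$ is large relative to $r$, e.g.\ whenever $n(\a+\b+3)\ge (r-1)(r+\a+\b)$, since then every paired factor $(n+1-j)(n+\a+\b+2+j)$, $0\le j\le r-1$, is at least $n^2$ and the termwise bound is immediate. You should know that the paper's own proof shares the defect you were trying to fix: iterating $E_m(f)\le m^{-1}E_{m-1}(f')$ over $m=n,\dots,n-r+1$ only gives the constant $(n-r)!/n!$, which is $\ge n^{-r}$, not $\le n^{-r}$. So your instinct about where the induction collapses was exactly right; what your analysis shows, once the spurious $4^r$ is stripped out, is that the difficulty is genuine and the statement needs either a restriction of the type $n\gtrsim r^2$ (fixed $r$, large $n$ --- the regime in which the paper actually uses it) or a multiplicative constant.
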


\begin{proof}
By the Parseval identity, \eqref{eq:whDf=whf} and the formula for $h_k^{\a,\b}$, 
\begin{align*}
  E_n(f)_{L^2(w_{\a,\b})}^2 =   \|f -S_n^{\a,\b} f \|_{L^2(w_{\a,\b})}^2 =&   \sum_{k=n+1}^\infty \left | \wh f_k^{\a,\b} \right|^2 h_k^{\a,\b} 
  = \sum_{k=n}^\infty \Big | \wh{\partial f}_k^{\a+1,\b+1} \Big |^2 h_{k+1}^{\a,\b} \\
= &  \sum_{k=n}^\infty \Big | \wh{\partial f}_k^{\a+1,\b+1} \Big |^2 \frac{h_k^{\a+1,\b+1}}{(k+1)(k+\a+\b+2)} \\
\le & \,  n^{-2}E_{n-1} (f')_{L^2(w_{\a+1,\b+1})}^2, 
\end{align*}
where we have used the Parseval identity again at the last step. Iterating this inequality proves the stated result. 
\end{proof}

The identity \eqref{eq:DSn=Sn} allows us to derive error estimates for simultaneous approximation by $S_n^{\a,\b} f$.
For comparison with our later results, we formulate the following corollary. 

\begin{cor}
Let $\a,\b > -1$ and $r, s \in \NN$. For $f \in L^2(w_{\a+r,\b+r})$ such that $f^r \in L^2(w_{\a,\b})$ and 
$n \ge r \ge s$, 
\begin{equation} \label{eq:simultan1}
    \left\| \phi^{k} \left(\partial^k f - \partial^k S_n^{\a,\b}\right) \right \|_{L^2(w_{\a,\b})} \le c n^{-r+k} E_{n-r} (f^{(r)})_{L^2(w_{\a,\b})}, \quad 
     0\le k \le s. 
\end{equation}
\end{cor}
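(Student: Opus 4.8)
The plan is to reduce the entire estimate to the best-approximation bound \eqref{eq:bestapp} applied to the single function $\partial^k f$, after stripping off the weight $\phi^k$ by two elementary identities. The engine is the commutation relation \eqref{eq:DSn=Sn}, which I would first iterate: applying it $k$ times gives $\partial^k S_n^{\a,\b} f = S_{n-k}^{\a+k,\b+k}(\partial^k f)$ for $0\le k\le s\le r\le n$. Thus $\partial^k S_n^{\a,\b}$ is again a least-squares projection, but taken with respect to the shifted weight $w_{\a+k,\b+k}$ and acting on $\partial^k f$ in place of $f$.

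Next I would absorb the factor $\phi^k$ into the measure. Since $\phi^2 = 1-x^2$, one has the pointwise identity $\phi^{2k}w_{\a,\b}=w_{\a+k,\b+k}$, whence
\begin{equation*}
  \left\| \phi^k\left(\partial^k f - \partial^k S_n^{\a,\b} f\right)\right\|_{L^2(w_{\a,\b})}
   = \left\| \partial^k f - S_{n-k}^{\a+k,\b+k}(\partial^k f)\right\|_{L^2(w_{\a+k,\b+k})}.
\end{equation*}
The right-hand side is exactly the $L^2(w_{\a+k,\b+k})$ distance from $\partial^k f$ to $\Pi_{n-k}$ realized by the partial sum, so by the least-squares identity \eqref{eq:least} it equals $E_{n-k}(\partial^k f)_{L^2(w_{\a+k,\b+k})}$. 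This is the decisive point: the factor $\phi^k$, which at first looks like an obstruction, turns into precisely the right shift $(\a,\b)\mapsto(\a+k,\b+k)$ of the parameters, matching the weight carried by $\partial^k S_n^{\a,\b}$.

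I would then invoke \eqref{eq:bestapp} with $f$ replaced by $\partial^k f$, base weight $w_{\a,\b}$ replaced by $w_{\a+k,\b+k}$, degree $n-k$, and $r-k$ derivatives, giving
\begin{equation*}
  E_{n-k}(\partial^k f)_{L^2(w_{\a+k,\b+k})} \le (n-k)^{-(r-k)}\, E_{n-r}\big(f^{(r)}\big)_{L^2(w_{\a+r,\b+r})}.
\end{equation*}
To land on the weight $w_{\a,\b}$ asserted in the statement I would use the opposite monotonicity: since $\phi^{2r}\le 1$ on $[-1,1]$, one has $w_{\a+r,\b+r}\le w_{\a,\b}$ pointwise, so comparing any competing polynomial in both norms and taking the infimum yields $E_{n-r}(f^{(r)})_{L^2(w_{\a+r,\b+r})}\le E_{n-r}(f^{(r)})_{L^2(w_{\a,\b})}$. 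Finally, because $0\le k\le s\le r$ are fixed and $n\ge r$, the factor $(n-k)^{-(r-k)}$ is comparable to $n^{-r+k}$, which completes the chain.

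The argument is largely bookkeeping once the two weight identities are in hand; the only step requiring any care is the comparison $(n-k)^{-(r-k)}\le c\,n^{-r+k}$, which I would verify separately for $n\ge 2r$ (where $n-k\ge n/2$) and for the finitely many $r\le n<2r$, absorbing the latter into the constant since $r$ is a fixed parameter. The genuinely substantive input is \eqref{eq:bestapp}; everything else is the recognition that $\partial^k S_n^{\a,\b}$ is a least-squares operator in the shifted weight and that $\phi^{2k}$ implements exactly that shift.
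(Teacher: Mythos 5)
Your proposal is correct and takes essentially the same route as the paper: iterate \eqref{eq:DSn=Sn}, absorb $\phi^{2k}$ into the weight via $\phi^{2k}w_{\a,\b}=w_{\a+k,\b+k}$ to identify the left-hand side with $E_{n-k}(f^{(k)})_{L^2(w_{\a+k,\b+k})}$, apply \eqref{eq:bestapp}, and then drop the parameter shift using $w_{\a+r,\b+r}\le w_{\a,\b}$ pointwise. The only difference is that you make explicit the bookkeeping (notably $(n-k)^{-(r-k)}\le c\,n^{-r+k}$) that the paper's one-line proof leaves implicit.
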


\begin{proof}
By \eqref{eq:DSn=Sn}, the left hand side of \eqref{eq:simultan1} can be rewritten as $E_{n-k} (f^{(k)})_{L^2(w_{\a+k,\b+k})}$,
which is bounded by, by \eqref{eq:bestapp}, $n^{-r+k} E_{n-r} (f^{(r)})_{L^2(w_{\a+ r-k, \b + r-k})}$, in which we can remove 
$r-k$ since $w_{\a,\b}(t) \le 1$ if $\a,\b \ge 0$ and $r - k \ge 0$. 
\end{proof}

It is possible to remove $\phi^k$ in the left hand side of \eqref{eq:simultan1} with the penalty of a higher power of $n$ 
in the righthand side. This was first done in \cite{CQ}, see the proof in \cite{CHQZ}, for the Chebyshev and the Legendre 
cases with $r=1$ and later extended to the Gegenbauer weight in \cite{Guo}, but with $\|f^{(r)}\|_{L^2(w_{\a,\b})}$ in 
place of $E_{n-r} (f^{(r)})_{L^2(w_{\a,\b})}$ in \eqref{eq:simultan2} below, which is weaker than  \eqref{eq:simultan2} by 
\eqref{eq:Jackson2}. We give a complete proof for the Jacobi weight. 

\begin{thm} \label{thm:simultan2}
Let $\a,\b > -1$, $r =1,2,\ldots$, and $f \in W_2^r(w_{\a,\b})$. Then 
\begin{equation} \label{eq:simultan2}
    \left\| \partial^k f - \partial^k S_n^{\a,\b}\right \|_{L^2(w_{\a,\b})} 
         \le c_{\a,\b}\, n^{-r+2k-1/2} E_{n-r} (f^{(r)})_{L^2(w_{\a,\b})}, \quad 0 \le k \le r,
\end{equation}
where $c_{\a,\b}$ is proportional to $1/ \sqrt{\min\{\a,\b\}+1}$ when $k=1$. 
Moreover, the estimate \eqref{eq:simultan2} is sharp. 
\end{thm}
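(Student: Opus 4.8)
The plan is to iterate the commutation identity \eqref{eq:DSn=Sn} so as to reduce \eqref{eq:simultan2} to a single weighted $L^2$ inequality for the differentiated remainder, and then to prove that inequality by controlling the (non-diagonal) Gram matrix of the differentiated Jacobi polynomials. Set $R:=f-S_n^{\a,\b}f=\sum_{j>n}\wh f_j^{\a,\b}J_j^{\a,\b}$. Iterating \eqref{eq:DSn=Sn} gives $\partial^k S_n^{\a,\b}f=S_{n-k}^{\a+k,\b+k}(\partial^k f)$, while \eqref{eq:diffJ} gives $\partial^k J_j^{\a,\b}=J_{j-k}^{\a+k,\b+k}$, so that
\begin{equation*}
  \partial^k f-\partial^k S_n^{\a,\b}f=\partial^k R=\sum_{j>n}\wh f_j^{\a,\b}\,J_{j-k}^{\a+k,\b+k}.
\end{equation*}
Squaring and integrating against $w_{\a,\b}$ turns the left side of \eqref{eq:simultan2} into the quadratic form
\begin{equation*}
  \left\|\partial^k R\right\|_{L^2(w_{\a,\b})}^2=\sum_{j,j'>n}\wh f_j^{\a,\b}\,\wh f_{j'}^{\a,\b}\,M_{j,j'},\qquad
  M_{j,j'}:=\la \partial^k J_j^{\a,\b},\,\partial^k J_{j'}^{\a,\b}\ra_{\a,\b}.
\end{equation*}
Because the $J_{j-k}^{\a+k,\b+k}$ are orthogonal for $w_{\a+k,\b+k}$ and not for $w_{\a,\b}$, the matrix $M$ is not diagonal, and controlling it is the entire content of the theorem; by contrast $\sum_{j>n}|\wh f_j^{\a,\b}|^2h_{j-k}^{\a+k,\b+k}=\|\partial^k R\|_{L^2(w_{\a+k,\b+k})}^2$ is the diagonal-weight version already controlled through \eqref{eq:simultan1} and \eqref{eq:bestapp}.

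I would compute $M_{j,j'}$ by repeatedly invoking the Jacobi equation $\CL_{\a,\b}J_j^{\a,\b}=-\l_j J_j^{\a,\b}$ with $\l_j=j(j+\a+\b+1)$ and integrating by parts. For $k=1$ a single integration by parts gives, when $\a,\b>0$,
\begin{equation*}
  M_{j,j'}=\l_j\la J_j^{\a,\b},J_{j'}^{\a,\b}\ra_{\a-1,\b-1}-2\la x\,\partial J_j^{\a,\b},J_{j'}^{\a,\b}\ra_{\a-1,\b-1},
\end{equation*}
and for general $k$ an analogous reduction expresses $M_{j,j'}$ through inner products against $w_{\a-k,\b-k}$ that carry a factor $\l_j\cdots\l_{j-k+1}\asymp j^{2k}$. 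From this I would extract two facts. First, the diagonal satisfies $M_{j,j}\asymp j^{2k-1}h_{j-k}^{\a+k,\b+k}$, i.e.\ the boundary behaviour of the weight costs only \emph{one} power of $j$; for $k=1$ the comparison constant is governed by the near-endpoint parts of $\int(J_j^{\a,\b})^2w_{\a-1,\b-1}$, which, regularized by the exponent $\a$ at $x=1$ and $\b$ at $x=-1$, contribute a factor proportional to $(\a+1)^{-1}+(\b+1)^{-1}\le 2(\min\{\a,\b\}+1)^{-1}$. Second, the off-diagonal entries decay in $|j-j'|$: in the bulk the two Jacobi polynomials oscillate out of phase, and the resulting cancellation makes $|M_{j,j'}|$, normalized by the diagonal, summable uniformly in $j$.

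With these two facts a Schur/Cauchy–Schwarz estimate of the quadratic form yields
\begin{equation*}
  \left\|\partial^k R\right\|_{L^2(w_{\a,\b})}^2\le\frac{c\,n^{2k-1}}{\min\{\a,\b\}+1}\sum_{j>n}\big|\wh f_j^{\a,\b}\big|^2h_{j-k}^{\a+k,\b+k}
  =\frac{c\,n^{2k-1}}{\min\{\a,\b\}+1}\left\|\partial^k R\right\|_{L^2(w_{\a+k,\b+k})}^2.
\end{equation*}
By \eqref{eq:bestapp} applied to $\partial^k f$ with parameters $(\a+k,\b+k)$, together with $w_{\a+r,\b+r}\le w_{\a,\b}$, the last factor equals $E_{n-k}(\partial^k f)_{L^2(w_{\a+k,\b+k})}^2\le c\,n^{-2(r-k)}E_{n-r}(f^{(r)})_{L^2(w_{\a,\b})}^2$; inserting this, the exponents combine to $n^{-2r+4k-1}$, and taking square roots produces \eqref{eq:simultan2} with $c_{\a,\b}\propto(\min\{\a,\b\}+1)^{-1/2}$ at $k=1$. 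Sharpness is seen on $f=J_{n+1}^{\a,\b}$: then $S_n^{\a,\b}f=0$, so $\partial^k R=J_{n+1-k}^{\a+k,\b+k}$ and $\partial^r f=J_{n+1-r}^{\a+r,\b+r}$, and the diagonal asymptotics $M_{j,j}\asymp j^{2k-1}h_{j-k}^{\a+k,\b+k}$ together with $E_{n-r}(\partial^r f)_{L^2(w_{\a,\b})}\le\|\partial^r f\|_{L^2(w_{\a,\b})}$ show that the two sides of \eqref{eq:simultan2} agree in order.

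The main obstacle is the joint control of the Gram matrix $M$: fixing the diagonal with the correct endpoint constant $(\min\{\a,\b\}+1)^{-1}$, and, above all, proving that the off-diagonal entries decay quickly enough for the Schur bound. This is exactly where the gain of one power of $n$ over the crude Markov estimate $n^{2k}$ lives, and where it is essential that $\partial^k R$ is a Fourier–Jacobi tail (orthogonal to $\Pi_n$) rather than an arbitrary polynomial.
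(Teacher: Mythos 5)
Your overall setup is sensible and your diagonal asymptotics are in fact correct (in the paper's normalization $\partial^k J_j^{\a,\b}=J_{j-k}^{\a+k,\b+k}$ one does have $M_{j,j}=\|J_{j-k}^{\a+k,\b+k}\|_{L^2(w_{\a,\b})}^2\asymp j^{2k-1}h_{j-k}^{\a+k,\b+k}$), but the central inequality of your argument,
\begin{equation*}
  \left\|\partial^k R\right\|_{L^2(w_{\a,\b})}^2\ \le\ \frac{c\,n^{2k-1}}{\min\{\a,\b\}+1}\sum_{j>n}\big|\wh f_j^{\a,\b}\big|^2h_{j-k}^{\a+k,\b+k},
\end{equation*}
is false, and no off-diagonal decay can rescue it, because the obstruction is the diagonal itself. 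This inequality is an operator-norm bound for the Gram matrix restricted to indices $j,j'>n$, and any Schur-type argument can at best produce the factor $\sup_{j>n}\bigl(M_{j,j}/h_{j-k}^{\a+k,\b+k}\bigr)\asymp\sup_{j>n}j^{2k-1}=\infty$, never $n^{2k-1}$; indeed, for a positive semidefinite matrix the operator norm always dominates the largest diagonal entry. Concretely, take $f=J_{N+1}^{\a,\b}$ with $N\gg n$: then $S_n^{\a,\b}f=0$, $\partial^kR=J_{N+1-k}^{\a+k,\b+k}$, your left-hand side is $\asymp N^{2k-1}h_{N+1-k}^{\a+k,\b+k}$ while your right-hand side is $\asymp n^{2k-1}h_{N+1-k}^{\a+k,\b+k}$, and the ratio blows up as $N\to\infty$. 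The honest almost-diagonal version, keeping $j^{2k-1}$ inside the sum, does not prove the theorem either: using $\wh f_j^{\a,\b}=\wh{\partial^kf}_{j-k}^{\a+k,\b+k}$ and \eqref{eq:bestapp}, each term satisfies $|\wh f_j^{\a,\b}|^2h_{j-k}^{\a+k,\b+k}\lesssim j^{-2(r-k)}E_{n-r}(f^{(r)})^2_{L^2(w_{\a,\b})}$ for $j>n$, and the resulting series $\sum_{j>n}j^{2k-1}\,j^{-2(r-k)}=\sum_{j>n}j^{4k-2r-1}$ diverges whenever $2k\ge r$ — in particular in the basic case $k=r$, and for $k=r=1$ the coefficients carry no rate at all beyond square-summability. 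So your framework collapses exactly where the content of the theorem lies.

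The structural point you are missing is that the Gram matrix of the differentiated tail is \emph{not} diagonally dominated in any useful sense: $\|\partial^kR\|_{L^2(w_{\a,\b})}$ is controlled only because of cancellation across the entire tail, and extracting that cancellation is the whole content of the paper's proof. The paper splits $\partial f-\partial S_n^{\a,\b}f=\bigl(f'-S_{n-1}^{\a,\b}f'\bigr)+\bigl(S_{n-1}^{\a,\b}f'-\partial S_n^{\a,\b}f\bigr)$ by the triangle inequality; the first piece has matched weight (its norm is exactly $E_{n-1}(f')_{L^2(w_{\a,\b})}$), and Lemma~\ref{lem:main-lem} shows, via the connection coefficients \eqref{eq:Ja1b1=Jab} and a telescoping of the tail sums $\Sigma_{1,j},\Sigma_{2,j}$, that the second piece is a \emph{rank-two} expression involving only $\wh{\partial f}_n^{\a,\b}$ and $\wh{\partial f}_{n+1}^{\a,\b}$ times fixed polynomials of degree $\le n-1$. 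Only this fixed-degree object ever sees the weight mismatch, which is how a power of $n$ (rather than of $j$, or a divergent sum) enters; induction on $k$ then finishes. Finally, your sharpness argument also has a gap: bounding $E_{n-r}(f^{(r)})_{L^2(w_{\a,\b})}$ above by $\|f^{(r)}\|_{L^2(w_{\a,\b})}$ and matching orders against that upper bound proves nothing, since here $E_{n-r}(f^{(r)})_{L^2(w_{\a,\b})}\ll\|f^{(r)}\|_{L^2(w_{\a,\b})}$ (the wrong-weight norm of $J_{n+1-r}^{\a+r,\b+r}$ lives in its low-degree components, which best approximation from $\Pi_{n-r}$ removes). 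The repair is to compute the error exactly: since $f^{(r)}$ has degree $n+1-r$ and all $J_m^{\g,\d}$ have leading term $x^m/m!$, only the top coefficient survives and $E_{n-r}(f^{(r)})_{L^2(w_{\a,\b})}=(h_{n+1-r}^{\a,\b})^{1/2}$; with this value your example does exhibit sharpness, in the same way as the example used in the paper.
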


Comparing with \eqref{eq:thm1.2}, the order of $n$ in \eqref{eq:simultan2} is much weaker, which shows that
the least polynomials for $L^2(w_{\a,\b})$ is not suitable for simultaneous approximation. 

The proof of this theorem depends on two lemmas. The first one is an identity on the Jacobi polynomials. 

\begin{lem}
For $\a, \b > -1$ and $n \in \NN$, 
\begin{equation} \label{eq:Ja1b1=Jab}
J_n^{\a+1,\b+1}(t) = \sum_{j=0}^n C_{n,j}^{\a,\b} J_j^{\a,\b} (t), \qquad C_{n,j}^{\a,\b} := (-1)^{j+n} A_j^{\a,\b} B_n^{\a,\b} 
      + A_j^{\b,\a} B_n^{\b,\a}, 
\end{equation}
where 
$$
 A_j^{\a,\b} := \frac{(\a+\b+2)_{2j}}{(\a+1)_{j}} \quad\hbox{and}\quad B_n^{\a,\b}:= \frac{(\a+1)_{n+1}}{(\a+\b+2)_{2n+1}}.
$$
\end{lem}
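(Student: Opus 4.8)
The plan is to read off the coefficients $C_{n,j}^{\a,\b}$ as the ordinary Fourier coefficients of $J_n^{\a+1,\b+1}$ with respect to the orthogonal basis $\{J_j^{\a,\b}\}_{j=0}^{n}$ of $\Pi_n$. Since $J_n^{\a+1,\b+1}\in\Pi_n$ and the $J_j^{\a,\b}$ are orthogonal for $\la\cdot,\cdot\ra_{\a,\b}$, the expansion \eqref{eq:Ja1b1=Jab} holds with
$$
  C_{n,j}^{\a,\b}=\frac{\la J_n^{\a+1,\b+1},J_j^{\a,\b}\ra_{\a,\b}}{h_j^{\a,\b}},
$$
so the whole problem reduces to evaluating the pairing $\la J_n^{\a+1,\b+1},J_j^{\a,\b}\ra_{\a,\b}$ in closed form and dividing by the explicit constant $h_j^{\a,\b}$ of \eqref{eq:h_n}.

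First I would lower the pairing to the natural weight of the smaller family. Writing $w_{\a,\b}P_j^{(\a,\b)}$ through the Rodrigues formula \cite{Sz} as $\frac{(-1)^j}{2^jj!}\frac{d^j}{dt^j}\big[(1-t^2)^jw_{\a,\b}\big]$ and integrating by parts $j$ times against $P_n^{(\a+1,\b+1)}$ — the boundary terms vanish because each intermediate derivative still carries a positive power of $(1-t^2)$ times $w_{\a,\b}$, which dies at $\pm1$ for $\a,\b>-1$ — the pairing collapses to a constant multiple of $\int_{-1}^1 \big[P_n^{(\a+1,\b+1)}\big]^{(j)}(t)\,(1-t^2)^jw_{\a,\b}(t)\,dt$. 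The classical differentiation formula $\frac{d^j}{dt^j}P_n^{(\a+1,\b+1)}=c\,P_{n-j}^{(\a+1+j,\b+1+j)}$ and $(1-t^2)^jw_{\a,\b}=w_{\a+j,\b+j}$ then reduce everything to the single integral
$$
   I_{m}:=\int_{-1}^1 P_{m}^{(\gamma+1,\delta+1)}(t)\,w_{\gamma,\delta}(t)\,dt,\qquad m=n-j,\ \gamma=\a+j,\ \delta=\b+j,
$$
a Jacobi polynomial integrated against a weight whose two parameters are each one unit too small.

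The two-term, $\a\leftrightarrow\b$–symmetric shape of the answer is forced by this last integral. I would write $w_{\gamma,\delta}=\frac{1}{1-t^2}w_{\gamma+1,\delta+1}$ and split $\frac{1}{1-t^2}=\frac12\big(\frac{1}{1-t}+\frac{1}{1+t}\big)$, turning $I_m$ into the sum of $\int P_m^{(\gamma+1,\delta+1)}w_{\gamma+1,\delta}$ and $\int P_m^{(\gamma+1,\delta+1)}w_{\gamma,\delta+1}$. Each is a genuine one-sided moment: substituting $u=\frac{1-t}{2}$ and integrating the hypergeometric series for $P_m^{(\gamma+1,\delta+1)}$ term by term against the Beta kernel, the $(\gamma+2)_k$ produced by the Beta integral cancels the denominator $(\gamma+2)_k$ of the series, leaving a single ${}_2F_1(-m,\,m+\gamma+\delta+3;\,\gamma+\delta+3;\,1)$ that sums by Chu--Vandermonde to one quotient of Gamma functions. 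The reflection $P_m^{(\gamma+1,\delta+1)}(-t)=(-1)^mP_m^{(\delta+1,\gamma+1)}(t)$ together with $w_{\gamma+1,\delta}(-t)=w_{\delta,\gamma+1}(t)$ then identifies the second integral with the first after interchanging $\a$ and $\b$ and inserting the sign $(-1)^m=(-1)^{n+j}$; this is precisely the $(-1)^{j+n}A_j^{\a,\b}B_n^{\a,\b}+A_j^{\b,\a}B_n^{\b,\a}$ pattern.

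What remains is pure bookkeeping: assembling the constant from the Rodrigues normalization, the $J$–normalization $2^n/(n+\a+\b+1)_n$, the differentiation constant, the two Gamma quotients from the Chu--Vandermonde step, and dividing by $h_j^{\a,\b}$. I expect this constant-chasing — reconciling all the powers of $2$ and the $\Gamma$/Pochhammer factors so that they collapse into the compact products $A_j^{\a,\b}=\frac{(\a+\b+2)_{2j}}{(\a+1)_j}$ and $B_n^{\a,\b}=\frac{(\a+1)_{n+1}}{(\a+\b+2)_{2n+1}}$ — to be the only real obstacle, since every individual step is either orthogonality, one integration by parts, or a single application of Chu--Vandermonde. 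As a sanity check one verifies $C_{n,n}^{\a,\b}=1$ directly (both $J_n^{\a+1,\b+1}$ and $J_n^{\a,\b}$ have leading coefficient $1/n!$), which the product formula reproduces since $A_n^{\a,\b}B_n^{\a,\b}+A_n^{\b,\a}B_n^{\b,\a}=\frac{\a+n+1}{\a+\b+2n+2}+\frac{\b+n+1}{\a+\b+2n+2}=1$.
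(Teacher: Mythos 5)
Your route is genuinely different from the paper's. The paper never computes an inner product: it iterates the two one\--parameter contiguous relations \eqref{eq:JacobiR1} to expand $J_n^{\a+1,\b}$ and $J_n^{\a,\b+1}$ in the $J_k^{\a,\b}$, composes the two expansions, interchanges the order of summation, and evaluates the resulting inner alternating sum by induction on $n$. You instead read off $C_{n,j}^{\a,\b}$ as a Fourier coefficient and evaluate the pairing by Rodrigues' formula, $j$ integrations by parts, the differentiation formula, the splitting $\tfrac{1}{1-t^2}=\tfrac12\left(\tfrac{1}{1-t}+\tfrac{1}{1+t}\right)$, and Chu--Vandermonde. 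The skeleton of your argument is sound: the boundary terms do vanish for $\a,\b>-1$; the Beta-kernel cancellation works exactly as you describe for the moment against $w_{\g+1,\d}$ (the bottom parameter $\g+2$ reappears from the Beta integral); and reflection handles the companion moment, producing the sign $(-1)^m=(-1)^{n+j}$ and the $\a\leftrightarrow\b$ symmetry. Your approach needs no induction and explains structurally why the answer is a two-term $\a\leftrightarrow\b$-symmetric expression, at the price of heavier classical machinery; the paper's proof is more elementary but hides that structure inside an inductively verified sum.

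However, the ``pure bookkeeping'' you defer is precisely where the proposal does not close, and it cannot be waved off. Carrying your computation through with the paper's printed normalization $J_n^{\a,\b}=\frac{2^n}{(n+\a+\b+1)_n}P_n^{(\a,\b)}$, and with the norm $h_j^{\a,\b}$ correctly recomputed for that normalization, one obtains
$$
 C_{n,j}^{\a,\b}=2^{\,n-j}\left[(-1)^{n+j}A_j^{\a,\b}B_n^{\a,\b}+A_j^{\b,\a}B_n^{\b,\a}\right],
$$
that is, the stated formula times an extra $2^{n-j}$. A two-line check exposes this: from \eqref{eq:JacobiP}, $J_1^{\a,\b}(t)=t+\frac{\a-\b}{\a+\b+2}$, hence
$$
 J_1^{\a+1,\b+1}(t)-J_1^{\a,\b}(t)=\frac{2(\b-\a)}{(\a+\b+2)(\a+\b+4)},
$$
whereas the lemma gives $C_{1,0}^{\a,\b}=-B_1^{\a,\b}+B_1^{\b,\a}=\frac{\b-\a}{(\a+\b+2)(\a+\b+4)}$, off by the factor $2$. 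The source of the clash is a normalization inconsistency inside the paper itself: the printed definition of $J_n^{\a,\b}$ and \eqref{eq:diffJ} require the factor $2^n$, whereas \eqref{eq:h_n} and the relations \eqref{eq:JacobiR1} (from which the paper derives the lemma's constants) are the ones valid for $J_n^{\a,\b}=P_n^{(\a,\b)}/(n+\a+\b+1)_n$ without the $2^n$. This has two consequences for you. First, your plan to divide by ``the explicit constant $h_j^{\a,\b}$ of \eqref{eq:h_n}'' compounds the problem, since that displayed formula is itself missing a factor $2^{2j}$ under the printed normalization; you would have to recompute $h_j^{\a,\b}$ from scratch in whichever normalization you fix. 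Second, your only verification, $C_{n,n}^{\a,\b}=1$, is structurally blind to the discrepancy, because the spurious factor $2^{n-j}$ equals $1$ precisely at $j=n$; a check at $(n,j)=(1,0)$ would have caught it. So, as written, your proposal proves a correctly normalized variant of the identity rather than the identity as stated; to complete it you must first pin down one consistent normalization and rederive $h_j^{\a,\b}$ (and hence $A$, $B$) in that normalization.
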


\begin{proof}
The following relations on the Jacobi polynomials are stated in \cite{X16}, 
\begin{align}\label{eq:JacobiR1}
\begin{split}
 J_n^{\a,\b}(t) & =  J_n^{\a+1,\b}(t) - \tau_{n}^{\a,\b} J_{n-1}^{\a+1,\b}(t), \\ 
 J_n^{\a,\b}(t) & =  J_n^{\a,\b+1}(t) +  \tau_{n}^{\b,\a} J_{n-1}^{\a,\b+1}(t),  
\end{split}
\end{align}
where 
$\tau_{n}^{\a,\b}: = (n+\b)/((2n+\a+\b)(2n+\a+\b+1))$. Iterating these identities, it is easy to see that 
\begin{align*}
  J_n^{\a+1,\b} (t) = &\ \frac{(\b+1)_n}{(\a+\b+2)_{2n}} \sum_{k=0}^n \frac{(\a+\b+2)_{2k}}{(\b+1)_k} J_k^{\a,\b}, \\
  J_n^{\a,\b+1} (t) = &\ \frac{(\a+1)_n}{(\a+\b+2)_{2n}} \sum_{k=0}^n (-1)^{n-k} \frac{(\a+\b+2)_{2k}}{(\a+1)_k} J_k^{\a,\b}.   
\end{align*}
Together, these two identities imply that
\begin{align*}
J_n^{\a+1,\b+1} =  &\ \frac{(\b+2)_n}{(\a+\b+3)_{2n}} \sum_{k=0}^n \frac{(\a+\b+3)_{2k}}{(\b+2)_k} J_k^{\a,\b+1} \\
 =   &\ \frac{(\b+2)_n}{(\a+\b+3)_{2n}} \sum_{j=0}^n(-1)^j   
      \frac{(\a+\b+2)_{2j}}{(\a+1)_j} J_j^{\a,\b}\sum_{k=j}^n (-1)^k \frac{2k+\a+\b}{\a+\b+2}\frac{(\a+1)_k}{(\b+2)_k},
\end{align*}
where we have interchanged the order of summations. By induction on $n$, we can establish that 
$$
 \sum_{k=j}^n (-1)^k \frac{2k+\a+\b}{\a+\b+2}\frac{(\a+1)_k}{(\b+2)_k}  
 = (-1)^n \frac{\a+1}{\a+\b+2} \frac{(\a+2)_n}{(\b+2)_n} + (-1)^j \frac{\b+1}{\a+\b+2} \frac{(\a+1)_j}{(\b+1)_j}
$$
from which the stated result follows from a quick simplification. 
\end{proof}

\begin{rem}
The connection coefficients that appear when writing $J_n^{\a,\b}$ in terms of $J_n^{\g,\d}$ are non-negative if $\a = \g$ and
$\b > \d > -1$ or $\b = \d$ and $\a > \g > -1$, or $\a=\b > \g = \d > -1$. It is interesting to observe that the coefficients in \eqref{eq:Ja1b1=Jab} may not be all positive when $\a \ne \b$. For example, it is easy to see that the coefficient for $j =1$ and 
$n =4$ is negative if $\a > \b$. 
\end{rem}

Our main effort lies in establishing the identity \eqref{eq:main-lem} in the following lemma. 

\begin{lem} \label{lem:main-lem}
Let $\a,\b > -1$. If $ f\in W_2^1(w_{\a,\b})$, then 
\begin{align}\label{eq:main-lem} 
   S_{n-1}^{\a,\b}(f') - \partial S_n^{\a,\b}(f) = & \ \wh{\partial f}_n^{\a,\b} \sum_{j=0}^{n-1} 
      \left ( (-1)^{n+j} A_j^{\a,\b} B_n^{\a,\b}+A_j^{\b,\a}   B_n^{\b,\a} \right) J_j^{\a,\b}  \notag \\
       + & \wh{\partial f}_{n+1}^{\a,\b} \sum_{j=0}^{n-1}  \left ( (-1)^{n+j} A_j^{\a,\b} B_n^{\a,\b}D_n^{\a,b} -
         A_j^{\b,\a} B_n^{\b,\a}D_n^{\b,\a}\right)    J_j^{\a,\b},
\end{align}
where $A_j^{\a,\b}$ and $B_n^{\a,\b}$ are defined in \eqref{eq:Ja1b1=Jab} and 
$$
  D_j^{\a,\b} = \frac{(j+\b+1)}{(2j+\a+\b+2)(2j+\a+\b+3)}  .
$$
\end{lem}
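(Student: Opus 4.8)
The plan is to read off the $(\a,\b)$-Fourier--Jacobi coefficients of the left-hand side directly, which turns the identity into a short telescoping computation. First I would use \eqref{eq:DSn=Sn}, shifted by one index, to rewrite $\partial S_n^{\a,\b}(f)=S_{n-1}^{\a+1,\b+1}(f')$, so that the left side of \eqref{eq:main-lem} becomes $S_{n-1}^{\a,\b}(g)-S_{n-1}^{\a+1,\b+1}(g)$ with $g:=f'$. This is a polynomial of degree at most $n-1$, so it suffices to determine its coefficient $d_j$ against $J_j^{\a,\b}$ for each $0\le j\le n-1$. Writing $c_m^{(0)}:=\wh g_m^{\a,\b}=\wh{\partial f}_m^{\a,\b}$ and $c_k^{(1)}:=\wh g_k^{\a+1,\b+1}$ and expanding each $J_k^{\a+1,\b+1}$ through the connection formula \eqref{eq:Ja1b1=Jab}, one gets $c_j^{(0)}=\sum_{k\ge j}c_k^{(1)}C_{k,j}^{\a,\b}$, whereas $S_{n-1}^{\a+1,\b+1}(g)$ contributes only the terms $k\le n-1$; subtracting leaves the clean tail $d_j=\sum_{k\ge n}c_k^{(1)}C_{k,j}^{\a,\b}$ (absolute convergence being guaranteed by the factorial decay of $B_k^{\a,\b}$).

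The next step is to collapse this tail. I would derive the ``reverse'' structure relation by iterating the two raising identities \eqref{eq:JacobiR1}: raising $\a$ and then $\b$ gives $J_m^{\a,\b}=J_m^{\a+1,\b+1}+E_mJ_{m-1}^{\a+1,\b+1}+F_mJ_{m-2}^{\a+1,\b+1}$ with $E_m=\tau_m^{\b,\a+1}-\tau_m^{\a,\b}$ and $F_m=-\tau_m^{\a,\b}\tau_{m-1}^{\b,\a+1}$, the $\tau$'s being those introduced after \eqref{eq:JacobiR1}. Read on the coefficient side, this becomes the three-term recurrence $c_k^{(1)}=c_k^{(0)}+E_{k+1}c_{k+1}^{(0)}+F_{k+2}c_{k+2}^{(0)}$. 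Substituting it into the tail and regrouping by $c_m^{(0)}$, the coefficient of $c_m^{(0)}$ for $m\ge n+2$ is $C_{m,j}^{\a,\b}+E_mC_{m-1,j}^{\a,\b}+F_mC_{m-2,j}^{\a,\b}$.

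The key observation is that this interior coefficient vanishes identically for $m>j$. The banded array with entries $1,E_m,F_m$ and the lower-triangular connection array $(C_{k,j}^{\a,\b})$ represent mutually inverse changes of basis between $\{J^{\a,\b}\}$ and $\{J^{\a+1,\b+1}\}$, so their product is the identity; hence $C_{m,j}^{\a,\b}+E_mC_{m-1,j}^{\a,\b}+F_mC_{m-2,j}^{\a,\b}=\delta_{m,j}=0$ whenever $m\ge n+2>j$. Only the two boundary terms survive, giving $d_j=c_n^{(0)}C_{n,j}^{\a,\b}+c_{n+1}^{(0)}\bigl(C_{n+1,j}^{\a,\b}+E_{n+1}C_{n,j}^{\a,\b}\bigr)$. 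The coefficient of $c_n^{(0)}=\wh{\partial f}_n^{\a,\b}$ is exactly $C_{n,j}^{\a,\b}$, which is the first bracket in \eqref{eq:main-lem}.

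It then remains to recognize the second bracket. Inserting the explicit $A_j^{\a,\b},B_n^{\a,\b}$ into $C_{n+1,j}^{\a,\b}+E_{n+1}C_{n,j}^{\a,\b}$ and using that the $j$-dependence factors out of $C_{k,j}^{\a,\b}$, the whole coefficient of $c_{n+1}^{(0)}$ splits into an $A_j^{\a,\b}$-part proportional to $E_{n+1}-B_{n+1}^{\a,\b}/B_n^{\a,\b}$ and an $A_j^{\b,\a}$-part proportional to $E_{n+1}+B_{n+1}^{\b,\a}/B_n^{\b,\a}$. A direct Pochhammer computation shows that the ratio $B_{n+1}^{\a,\b}/B_n^{\a,\b}=\tau_{n+1}^{\b,\a+1}$ cancels the first summand of $E_{n+1}$, collapsing each part to a multiple of $D_n^{\a,\b}$, respectively $D_n^{\b,\a}$; here the elementary identity $\tau_m^{\a,\b}+\tau_m^{\b,\a}=\tau_m^{\a,\b+1}+\tau_m^{\b,\a+1}=1/(2m+\a+\b+1)$ keeps the two parameter orders consistent and guarantees that the $\a\leftrightarrow\b$ swap behaves as needed. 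I expect this final bookkeeping with the Pochhammer symbols, together with tracking the signs against the displayed form of \eqref{eq:main-lem}, to be the main technical obstacle; the conceptual content is carried entirely by the mutual inverseness of the two connection arrays, which forces all interior terms to cancel.
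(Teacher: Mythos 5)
Your argument is correct in substance and reaches \eqref{eq:main-lem} by a genuinely different mechanism than the paper's. Both proofs start identically: with $g=f'$, the left-hand side is $S^{\a,\b}_{n-1}(g)-S^{\a+1,\b+1}_{n-1}(g)$, and its $J_j^{\a,\b}$-coefficient is the tail $\sum_{k\ge n}c_k^{(1)}C^{\a,\b}_{k,j}$ with $c_k^{(1)}:=\wh g_k^{\a+1,\b+1}$. The paper collapses this tail by exploiting the separated form of $C_{k,j}^{\a,\b}$ in \eqref{eq:Ja1b1=Jab}, reducing everything to the two scalar series $\Sigma_{1,n},\Sigma_{2,n}$, which it then evaluates by solving the $2\times 2$ linear system \eqref{eq:whf=F}--\eqref{eq:main-lem-2}, i.e.\ \eqref{eq:main-lem-3}. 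You instead invert the connection: the banded relation $J_m^{\a,\b}=J_m^{\a+1,\b+1}+E_mJ_{m-1}^{\a+1,\b+1}+F_mJ_{m-2}^{\a+1,\b+1}$ obtained by iterating \eqref{eq:JacobiR1} gives the coefficient recurrence $c_k^{(1)}=c_k^{(0)}+E_{k+1}c^{(0)}_{k+1}+F_{k+2}c^{(0)}_{k+2}$, and mutual inverseness of the two connection arrays, $C_{m,j}+E_mC_{m-1,j}+F_mC_{m-2,j}=\delta_{m,j}$, annihilates every interior term, leaving exactly the two boundary terms. Your closing Pochhammer identities check out: $B_{n+1}^{\a,\b}/B_n^{\a,\b}=\tau_{n+1}^{\b,\a+1}$, $E_{n+1}-\tau_{n+1}^{\b,\a+1}=-\tau_{n+1}^{\a,\b}=-D_n^{\a,\b}$, and $E_{n+1}+\tau_{n+1}^{\a,\b+1}=D_n^{\b,\a}$. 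Note that they produce the coefficient of $\wh{\partial f}^{\a,\b}_{n+1}$ as $A_j^{\b,\a}B_n^{\b,\a}D_n^{\b,\a}-(-1)^{n+j}A_j^{\a,\b}B_n^{\a,\b}D_n^{\a,\b}$, the negative of the bracket displayed in \eqref{eq:main-lem}; this is not an error on your side, since inserting \eqref{eq:main-lem-3} into \eqref{eq:main-lem-1} yields the same sign, so the sign in the statement is a typo of the paper (harmless later, because only absolute values of these brackets enter the proof of Theorem \ref{thm:simultan2}).

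The one genuine flaw is your convergence claim: absolute convergence of the tail is \emph{not} guaranteed by the factorial decay of $B_k^{\a,\b}$, because $c_k^{(1)}$ can grow at exactly the compensating factorial rate. For $f\in W_2^1(w_{\a,\b})$ all one knows is that $\ve_k:=|c_k^{(1)}|(h_k^{\a+1,\b+1})^{1/2}$ is square-summable, while $B_k^{\a,\b}(h_k^{\a+1,\b+1})^{-1/2}\sim k^{-\b-1/2}$ and $B_k^{\b,\a}(h_k^{\a+1,\b+1})^{-1/2}\sim k^{-\a-1/2}$; hence $|c_k^{(1)}C_{k,j}^{\a,\b}|$ is only $O\bigl(\ve_k\,k^{-\min\{\a,\b\}-1/2}\bigr)$, which need not be summable when $\min\{\a,\b\}\le 0$. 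The paper meets the same difficulty and evades it by first assuming $f$ smooth, so that $E_k(f')$ decays fast, and then extending to $W_2^1(w_{\a,\b})$ by a density argument. You could patch your proof the same way, but your scheme admits a cleaner repair that avoids infinite series altogether: the quantity you need is $d_j=c_j^{(0)}-\sum_{k=j}^{n-1}c_k^{(1)}C_{k,j}^{\a,\b}$, a finite sum; the recurrence for $c_k^{(1)}$ is rigorous for any $g\in L^2(w_{\a,\b})$ (pair the $L^2$-convergent expansion of $g$ with $J_k^{\a+1,\b+1}$ and note that only $m\in\{k,k+1,k+2\}$ contribute), so substituting it into this finite sum and telescoping with $C_{m,j}+E_mC_{m-1,j}+F_mC_{m-2,j}=\delta_{m,j}$ gives $d_j=c_n^{(0)}C_{n,j}^{\a,\b}+c_{n+1}^{(0)}\bigl(C_{n+1,j}^{\a,\b}+E_{n+1}C_{n,j}^{\a,\b}\bigr)$ with no limiting process at all. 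With that modification your argument proves the lemma directly for all $f\in W_2^1(w_{\a,\b})$, dispensing with the density step the paper needs.
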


\begin{proof}
First we assume that $f \in W_2^r(w_{\a,\b})$ for $r$ sufficiently large. Since $f' \in L^2 (w_{\a,\b})$, its Fourier 
orthogonal expansion is  
$$
    f' = \sum_{j=0}^\infty \wh {\partial f}_j^{\a,\b} J_j^{\a,\b}. 
$$
Moreover, since $L^2 (w_{\a,\b}) \subset L^2(w_{a+1,\b+1})$, we can also write, by \eqref{eq:Ja1b1=Jab}, that 
\begin{align*}
 f' =   \sum_{n=0}^\infty \wh {\partial f}_n^{\a+1,\b+1} J_n^{\a+1,\b+1} 
   &   = \sum_{n=0}^\infty \wh f_{n+1}^{\a,\b} \sum_{j=0}^n C_{n,j}^{\a,\b} J_j^{\a,\b} 
        = \sum_{j=0}^\infty  \bigg(\sum_{n=j}^\infty C_{n,j}^{\a,\b} \wh f_{n+1}^{\a,\b} \bigg) J_j^{\a,\b}.  
\end{align*}
Comparing the two expansions of $f'$, we conclude, by \eqref{eq:Ja1b1=Jab}, that 
\begin{equation}\label{eq:whf=F}
   \wh {\partial f}_j^{\a,\b}  =  \sum_{n=j}^\infty \wh f_{n+1}^{\a,\b} C_{n,j}^{\a,\b} =
      (-1)^j  A_j^{\a,\b} \Sigma_{1,j} +A_j^{\b,\a} \Sigma_{2,j},
\end{equation}
where 
$$
 \Sigma_{1,j} : =  \sum_{k=j}^\infty (-1)^k \wh f_{k+1}^{\a,\b} B_k^{\a,\b} \quad\hbox{and}\quad  \Sigma_{2,j}: = 
  \sum_{k=j}^\infty \wh f_{k+1}^{\a,\b} B_k^{\b,\a}. 
$$
The last two series are absolutely convergent, since $| \wh {\partial f}_{k+1}^{\a,\b}| (h_{k+1}^{\a,\b})^{\f12} \le E_k(f)_{\a,\b}$, 
which decays fast by \eqref{eq:bestapp}, and $B_k^{\a,\b}/(h_{k+1}^{\a,\b})^{\f12} \le c/ k^{2\b}$ and 
$B_k^{\a,\b}/(h_{k+1}^{\a,\b})^{\f12} \le c / k^{2\a}$. Since it is easy to check that 
$A_{j+1}^{\a,\b} B_j^{\b,\a} = A_{j+1}^{\b,\a} B_j^{\a,\b}$, we also have 
\begin{align} \label{eq:main-lem-2}
  \wh{\partial f}_{j+1}^{\a,\b} = \sum_{k=j+1}^\infty \wh f_{k+1}^{\a,\b} C_{j+1,k}^{\a,\b} = 
   (-1)^{j+1} A_{j+1}^{\a,\b} \Sigma_{1,j} + A_{j+1}^{\b,\a}  \Sigma_{2,j},
\end{align} 
where we have used $\Sigma_{1,j+1} = \Sigma_{1,j} - (-1)^j \wh f_{j+1}^{\a,\b} B_j^{\a,\b}$ and $\Sigma_{2, j+1} =  
\Sigma_{2,j} - \wh f_{j+1}^{\a,\b} B_j^{\b,\a}$. Solving \eqref{eq:whf=F} and \eqref{eq:main-lem-2} and simplifying, we
obtain
\begin{align} \label{eq:main-lem-3}
\begin{split}
\Sigma_{1,j} &\ = (-1)^j B_j^{\a,\b}  \Big(\wh{\partial f}_{j}^{\a,\b} -  D_j^{\a,\b} \wh{\partial f}_{j+1}^{\a,\b} \Big), \\ 
\Sigma_{2,j} & \ =   B_j^{\b,\a}  \Big( \wh{\partial f}_{j}^{\a,\b} +  D_j^{\b,\a} \wh{\partial f}_{j+1}^{\a,\b}\Big), 
\end{split}
\end{align}
Now, by \eqref{eq:diffJ} and \eqref{eq:Ja1b1=Jab},
$$
  \partial S_n^{\a,\b}(f) = \sum_{k=0}^{n-1} \wh f_{k+1}^{\a,\b} J_k^{\a+1,\b+1}
   = \sum_{j=0}^{n-1} \sum_{k=j}^{n-1} \wh f_{k+1}^{\a,\b} C_{k,j}^{\a,\b} J_j^{\a,\b},
$$
so that, by \eqref{eq:whf=F}, we conclude that 
\begin{align} \label{eq:main-lem-1}
  S_{n-1}^{\a,\b}(f') - \partial S_n^{\a,\b}(f) = &\ \sum_{j=0}^{n-1} \bigg(\wh f_j^{\a,\b} - 
         \sum_{k=j}^{n-1} \wh f_{k+1}^{\a,\b} C_{k,j}^{\a,\b}\bigg)J_j^{\a,\b} =\sum_{j=0}^{n-1}  
         \sum_{k=n}^\infty \wh f_{k+1}^{\a,\b} C_{k,j}^{\a,\b} J_j^{\a,\b}  \notag \\
         =&\ \sum_{j=0}^{n-1}  \left[ (-1)^j A_j^{\a,\b} \Sigma_{1,n} +  A_j^{\b,\a} \Sigma_{2,n}^{\b,\a}\right]J_j^{\a,\b}. 
\end{align}
Inserting the expressions for $\Sigma_{1,n}$ and $\Sigma_{2,n}$ in \eqref{eq:main-lem-2} completes the proof for 
smooth $f$. Since both sides of \eqref{eq:main-lem} is bounded for $f \in W_2^1(w_{\a,\b})$, as shown in the proof of
Theorem \ref{thm:simultan2}, the identity holds in $W_2^1(w_{\a,\b})$ by the usual density argument. 
\end{proof}

\bigskip\noindent
{\it Proof of Theorem \ref{thm:simultan2}.}
Assuming \eqref{eq:main-lem}, we proceed with the proof. First we consider the case $k=1$. Let $f \in W_2^r(w_{\a,\b})$.
By the triangle inequality, 
\begin{align*}
  \left\| \partial f - \partial S_n^{\a,\b} f \right \|_{L^2(w_{\a,\b})} \le 
    \left\| f' - S_{n-1}^{\a,\b} (f') \right \|_{L^2(w_{\a,\b})}+ \left \| S_{n-1}^{\a,\b}(f') - \partial S_n^{\a,\b}(f) \right \|_{L^2(w_{\a,\b})}.
\end{align*}
The first term in the right hand side is bounded by $E_{n-1}(f')_{\a,\b}$, which is small than the desired bound. We now 
bound the second term. By \eqref{eq:main-lem} 
\begin{align*}
   \|S_{n-1}^{\a,\b}(f') - \partial S_n^{\a,\b}(f)\|_{L^2(w_{\a,\b})}^2 = & \ \Big| \wh{\partial f}_n^{\a,\b}\Big|^2
       \sum_{j=0}^{n-1}  \left |(-1)^{n+j} A_j^{\a,\b} B_n^{\a,\b}+A_j^{\b,\a}   B_n^{\b,\a} \right| ^2 h_j^{\a,\b}  \notag \\
        + \Big |   \wh{\partial f}_{n+1}^{\a,\b}\Big|^2 & \sum_{j=0}^{n-1}  \left | (-1)^{n+j} A_j^{\a,\b} B_n^{\a,\b}D_n^{\a,\b} 
           - A_j^{\b,\a} B_n^{\b,\a}D_n^{\b,\a}\right |^2 h_j^{\a,\b}.
\end{align*}
By the expression of $A_j^{\a,\b}$ and $B_n^{\a,\b}$ in \eqref{eq:Ja1b1=Jab}, it is not difficult to verify that 
\begin{align*}
 \frac{\left|B_n^{\a,\b}\right|^2}{h_n^{\a,\b} } \sum_{j=0}^{n-1} \Big | A_j^{\a,\b} \Big|^2 h_j^{\a,\b} = & 
   \frac{n(n+\a) (n+\a+1)^2}{(\b+1)(2n+\a+\b+1)(2n+\a+\b+2)^2} \sim \frac{n}{\b+1} \\
\frac{\left|B_n^{\b,\a}\right|^2}{h_n^{\a,\b} } \sum_{j=0}^{n-1} \Big | A_j^{\b,\a} \Big|^2 h_j^{\a,\b} = & 
   \frac{n(n+\b) (n+\b+1)^2}{(\a+1)(2n+\a+\b+1)(2n+\a+\b+2)^2} \sim \frac{n}{\a+1}, 
\end{align*}
and $h_n^{\a,\b} |D_n^{\a,\b}|^2/h_{n+1}^{\a,\b} \sim 1$. Consequently, we deduce that 
\begin{align*}
 \|S_{n-1}^{\a,\b}(f') - \partial S_n^{\a,\b}(f)\|_{L^2(w_{\a,\b})}^2 &\  
     \le c_{\a,\b}\ n \left( \Big| \wh{\partial f}_n^{\a,\b}\Big|^2 h_n^{\a,\b} +  
 \Big| \wh{\partial f}_{n+1}^{\a,\b}\Big|^2 h_{n+1}^{\a,\b} \right) \\
  &\ \le c_{\a,\b}\ n \left[ E_{n-1}(f')_{\a,\b} \right]^2,
\end{align*}
where the last step follows from the Parseval identity. This proves \eqref{eq:simultan2} for $k=1$ and $r =1$, which implies
the case $k=1$ and $r \ge 1$ by \eqref{eq:bestapp}. 

The case $k > 1$ follows inductively. Our main effort lies in proving the inequality 
\begin{equation} \label{eq:simu-1}
  \left \| \partial^m \left(S_{n-1}^{\a,\b}(f')  - \partial  S_n^{\a,\b} f \right)\right \|_{L^2(w_{\a,\b})} \le c\, n^{2m+1} E_{n-1}(f')_{\a,\b} 
\end{equation}
for $m=1,2,\ldots$. Using \eqref{eq:main-lem} and $\partial^m J_j^{\a,\b} = J_{j-m}^{\a+m,\b+m}$, we see that the
main ingredient is the estimate the sum 
\begin{align*}
 \bigg \| \sum_{j=m}^{n-1}   A_j^{\a,\b} \left|J_{j-m}^{\a+m,\a+m} \right| \bigg \|_{L^2(w_{\a,\b})}^2  
  &   \le n \sum_{j=m}^{n-1} \Big| A_j^{\a,\b}\Big|^2 h_{j-m}^{\a+m,\b+m}  \\
  &   \le c\, n \sum_{j=m}^{n-1} \Big| A_j^{\a,\b}\Big|^2  h_{j-m}^{\a,\b}\, j^{2m-1}  \le c\, n^{4m+2\b+2}.
\end{align*}
where the first inequality follows from the Cauchy-Schwartz inequality and the second one follows from 
\begin{equation}\label{eq:simu-2}
h_n^{\a+m,\b+m} / h_n^{\a,\b} \sim n^{2m-1},
\end{equation}
which can be easily verified by \eqref{eq:h_n} and the asymptotic of the Gamma function, and the third one follows 
from a straightforward estimate. Consequently, it follows readily that 
$$
  \frac{\left| B_n^{\a,\b} \right|^2}{h_n^{\a,\b}} 
    \bigg \| \sum_{j=m}^{n-1} A_j^{\a,\b} \left| J_{j-m}^{\a+m,\a+m} \right| \bigg \|_{L^2(w_{\a,\b})}^2  \le c \, n^{2m+1}
$$
and the similar estimate holds when $\a$ and $\b$ are exchanged in $A_j^{\a,\b}$ and $B_n^{\a,\b}$. These estimates allow us
to estimate, by \eqref{eq:main-lem}, that 
\begin{align*}
  \left \| \partial^m \left(S_{n-1}^{\a,\b}(f')  - \partial  S_n^{\a,\b} f \right)\right \|_{L^2(w_{\a,\b})} 
    \le c\, n^{2m+1}  \left( \Big| \wh{\partial f}_n^{\a,\b}\Big|^2 h_n^{\a,\b} +  
        \Big| \wh{\partial f}_{n+1}^{\a,\b}\Big|^2 h_{n+1}^{\a,\b} \right),
\end{align*}
from which \eqref{eq:simu-1} follows readily. 

Assume now \eqref{eq:simultan2} has been established for a fixed $k$, we prove that it also holds for $k+1$. By the 
triangle inequality, 
\begin{align*}
 & \left\| \partial^{k+1} f - \partial^{k+1} S_n^{\a,\b} f \right \|_{L^2(w_{\a,\b})} \\
&  \qquad\quad   \le 
    \left\| \partial^k f'- \partial^k  S_{n-1}^{\a,\b} (f') \right \|_{L^2(w_{\a,\b})}+ \left \|\partial^k 
           \Big[S_{n-1}^{\a,\b}(f') - \partial S_n^{\a,\b}(f)\Big]\right \|_{L^2(w_{\a,\b})}.
\end{align*}
The second term is the right hand side can be bounded, by applying \eqref{eq:simu-1} with $m=k$ and \eqref{eq:bestapp}, by 
$c\, n^{2k+1/2} E_{n-1}(f')_{\a,\b} \le c\, n^{2k+1/2} n^{-r+1} E_{n-r}(f^{(r)})_{\a,\b}$, in which the power of $n$ can be written
as $-r+2(k+1)-1/2$, which agrees with that in \eqref{eq:simultan2} for $k+1$, whereas the first term in the right hand side can be 
bounded, by induction hypothesis with $r$ replaced by $r-1$, by a bound that is less than the above bound. This completes the 
proof of \eqref{eq:simultan2} for $k=1$ and the proof.  

To show that the order is sharp, we consider $g(t) = \wh J_{n+1}^{\a-k,\b-k}(t)$, which is well defined for $n$ large even if 
$\a <0$ or $\b < 0$. Then $g^{(k)}(t) = \wh J_{n+1-k}^{\a,\b}(t)$. Since the orthogonal expansion of $\wh J_{n+1 -k}^{\a,\b}$ is
itself, $E_{n-k}(g^{(k)})_{\a,\b} = \|\wh J_{n+1-k}^{\a,\b}\|_{\a,\b}$. Furthermore, by \eqref{eq:JacobiR1}, 
$J_{n+1-k}^{\a,\b} - J_{n+1-k}^{\a+k,\b+k}$ is a polynomial of degree $n-k$, so that 
$$
  \partial^k S_n^{\a,\b} g (t) = S_{n-k}^{\a+k,\b+k}(g^{(k)}) =  S_{n-k}^{\a+k,\b+k}(\wh J_{n+1-k}^{\a,\b}) = 
      \wh J_{n+1-k}^{\a,\b} - \wh J_{n+1-k}^{\a+k,\b+k}.
$$  
Consequently, $\partial^k g -   \partial^k S_n^{\a,\b} g = \wh J_{n+1-k}^{\a+k,\b+k}$. It then follows from \eqref{eq:simu-2}
that \eqref{eq:simultan2} is sharp for $k=r$. 
\qed

\section{Sobolev orthogonal polynomials and orthogonal expansions} \label{sect3}
\setcounter{equation}{0}

As mentioned in the introduction, for approximation in the Sobolev space $W_p^s(w_{\a,\b})$, we need to work with
the Jacobi polynomials with parameters $\a,\b$ being negative integers. Setting $\a,\b$ as negative integers in 
\eqref{eq:JacobiP} leads to a reduction of polynomial degrees, which causes problems when one considers 
orthogonal expansions. There have been several ways of remedying the definition of the Jacobi polynomials in 
the literature; see, for example, \cite{AAR02, GW, LX, APP98} and the references therein. Motivating by the study 
in \cite{X16}, which will be explained in the end of this section, we give another definition that can be regarded as 
either avoiding delicate extensions of the Jacobi polynomials to negative integers or as an alternative definition 
that holds for all negative indices.  

\begin{defn}
Let $\a,\b > -1$ and $s \in \NN$. For $\t \in [-1,1]$ and $n \in \NN_0$, define
\begin{align} \label{eq:CJ}
\begin{split}
   \CJ_n^{\a-s,\b-s}(x) = \CJ_{n,\t}^{\a-s,\b-s}(x):= \begin{cases}  \dfrac{(x-\t)^n}{n!}, & 0 \le n \le s-1, \vspace{.05in} \\  
        \displaystyle{\int_\t^x \frac{(x-t)^{s-1}}{(s-1)!} J_{n-s}^{\a,\b}(t) dt}, & n \ge s.
   \end{cases}
\end{split}
\end{align}
\end{defn}

It is evident that $\CJ_n^{\a-s,\b-s}$ is a polynomial of degree $n$. Furthermore, these polynomials evidently 
satisfy the following properties: 
\begin{align} 
 \partial^s \CJ_n^{\a-s,\b-s}(x) &\ = J_{n-s}^{\a,\b}(x), \qquad  n \ge s; \label{eq:CJ1a}\\
 \partial^k \CJ_n^{\a-s,\b-s}(\t) &\ = \begin{cases} \delta_{k,n}, & n \le s-1, \\ 
         0, &  n \ge s,
         \end{cases} \qquad 0\le k \le s-1, \label{eq:CJ1b}
\end{align}
where $\partial^k$ denotes the $k$-th derivative. Comparing with \eqref{eq:diffJ}, the identity \eqref{eq:CJ1a} 
suggests that these polynomials can be regarded as an extension of the Jacobi polynomials with negative parameters when 
$\a- s \le -1$ and/or $\b -s \le -1$. If $\a-s > -1$ and $\b-s > -1$, then both $J_n^{\a-s,\b-s}$ and $\CJ_n^{\a-s,\b-s}$ 
satisfy \eqref{eq:CJ1a}, but $J_n^{\a-s,\b-s}$ does not satisfy \eqref{eq:CJ1b}. These polynomials are orthogonal
with respect to the inner product
$$ 
\la f, g\ra_{\a,\b}^{-s} : = \int_{-1}^1 f^{(s)}(t) g^{(s)}(t) w_{\a,\b}(t) dt + \sum_{k=0}^{s-1} \l_k f^{(k)}(\t)g^{(k)}(\t), 
$$
where $\l_k$ are positive constants. 

\begin{thm} \label{thm:ortho-s}
For $\a,\b > -1$ and $s \in \NN$. The polynomial $\CJ_{n,\t}^{\a-s,\b-s}$ is orthogonal with respect to the inner product
$\la \cdot, \cdot \ra_{\a,\b}^{-s}$ and its norm square, $\mathfrak{h}_{n}^{\a-s,\b-s} :=  \la  \CJ_n^{\a-s,\b-s}, 
\CJ_n^{\a-s,\b-s}\ra_{\a,\b}^{-s}$, satisfies
$$
   \mathfrak{h}_{n}^{\a-s,\b-s} = \l_n, \quad 0 \le n \le s-1, \quad \hbox{and} \quad \mathfrak{h}_{n}^{\a-s,\b-s} = h_{n-s}^{\a,\b}, \quad n \ge s.
$$
\end{thm}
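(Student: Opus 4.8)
The plan is to reduce the entire statement to the two structural properties \eqref{eq:CJ1a} and \eqref{eq:CJ1b} of the polynomials $\CJ_n^{\a-s,\b-s}$, together with the $L^2(w_{\a,\b})$-orthogonality of the Jacobi polynomials. I would compute the inner product $\la \CJ_m^{\a-s,\b-s}, \CJ_n^{\a-s,\b-s}\ra_{\a,\b}^{-s}$ directly from its definition, splitting $\la\cdot,\cdot\ra_{\a,\b}^{-s}$ into its integral part and its boundary sum, and organizing the computation according to whether each of $m$ and $n$ is $\le s-1$ (where $\CJ$ reduces to the monomial $(x-\t)^n/n!$) or $\ge s$ (where $\CJ$ is the $s$-fold integral of a Jacobi polynomial). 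This produces three cases up to the symmetry $m\leftrightarrow n$.

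First, when both $m,n\le s-1$, each factor is a polynomial of degree $\le s-1$, so its $s$-th derivative vanishes identically and the integral term in $\la\cdot,\cdot\ra_{\a,\b}^{-s}$ drops out. Only the boundary sum survives, and by \eqref{eq:CJ1b} it collapses to $\sum_{k=0}^{s-1}\l_k \delta_{k,m}\delta_{k,n}=\l_m\delta_{m,n}$, which is $0$ for $m\ne n$ and equals $\l_n$ when $m=n$; this gives both the orthogonality and the norm formula $\mathfrak{h}_n^{\a-s,\b-s}=\l_n$ in this range. Second, in the mixed case $m\le s-1$ and $n\ge s$, the identity $\partial^s\CJ_m^{\a-s,\b-s}\equiv0$ again kills the integral term, while \eqref{eq:CJ1b} gives $\partial^k\CJ_n^{\a-s,\b-s}(\t)=0$ for every $0\le k\le s-1$, so every summand of the boundary term vanishes and the two polynomials are orthogonal.

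Third, when both $m,n\ge s$, property \eqref{eq:CJ1b} forces the whole boundary sum to vanish, and \eqref{eq:CJ1a} turns the integral term into $\int_{-1}^1 J_{m-s}^{\a,\b}(t)J_{n-s}^{\a,\b}(t)w_{\a,\b}(t)\,dt = \la J_{m-s}^{\a,\b},J_{n-s}^{\a,\b}\ra_{\a,\b}$, which by orthogonality of the Jacobi polynomials equals $h_{m-s}^{\a,\b}\delta_{m,n}$. This yields orthogonality for $m\ne n$ and the norm formula $\mathfrak{h}_n^{\a-s,\b-s}=h_{n-s}^{\a,\b}$ for $n\ge s$, completing all cases.

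I do not expect any genuine obstacle: once the defining properties \eqref{eq:CJ1a}--\eqref{eq:CJ1b} are in hand, the argument is entirely a bookkeeping split of the inner product into its integral and boundary pieces. The only point worth stating carefully is that this decomposition of $\la\cdot,\cdot\ra_{\a,\b}^{-s}$ is exactly matched by the dichotomy in the definition of $\CJ_n^{\a-s,\b-s}$: the low-index monomials are seen only by the boundary functionals and contribute nothing to the integral, while the high-index polynomials are annihilated by all boundary functionals and are governed purely by the Jacobi integral, so the two families never interact.
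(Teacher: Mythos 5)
Your proposal is correct and follows essentially the same route as the paper: split the inner product into its integral and boundary parts, case on whether the indices are below or at least $s$, and invoke \eqref{eq:CJ1a}, \eqref{eq:CJ1b}, and Jacobi orthogonality. If anything, you treat the mixed case $m\le s-1$, $n\ge s$ more explicitly than the paper, which folds it into the $n\ge s$ display where the symbol $J_{m-s}^{\a,\b}$ is not literally defined; your version is the cleaner bookkeeping of the same argument.
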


\begin{proof}
Let $m  \le n$. We consider the orthogonality of $\CJ_n^{\a-s,\b-s}$ and $\CJ_m^{\a-s,\b-s}$. If $n \le s-1$ then, 
by \eqref{eq:CJ1a} and \eqref{eq:CJ1b},
$$
 \la \CJ_n^{\a-s,\b-s},  \CJ_m^{\a-s,\b-s}\ra_{\a,\b}^{-s} = 
       \sum_{k=0}^{s-1} \l_k \partial^k \CJ_n^{\a-s,\b-s}(\t) \partial^k \CJ_m^{\a-s,\b-s}(\t) = \l_n \delta_{n,m}.
$$
Whereas if $n \ge s$, then, by \eqref{eq:CJ1a} and \eqref{eq:CJ1b},
$$
 \la \CJ_n^{\a-s,\b-s}, \CJ_m^{\a-s,\b-s}\ra_{\a,\b}^{-s} =  \int_{-1}^1 J_{n-s}^{\a,\b}(x) J_{m-s}^{\a,\b}(x) w_{\a,\b}(x) dx = 
   \delta_{n,m} h_{n-s}^{\a,\b} 
$$
by the orthogonality of the Jacobi polynomials. 
\end{proof}

As we mentioned before, the inner product $\la \cdot,\cdot\ra_{\a,\b}$ and its associated orthogonal polynomials 
have been studied in the literature (see, \cite{MX} and its references). Instead of starting with an extension of the 
Jacobi polynomials to parameters being negative integers and constructing orthogonal polynomials accordingly, 
our construction is more direct with a strikingly, in comparison, simple proof and works for all real parameters. 

For $f \in \CW_p^s(w_{\a,\b})$, we can study the Fourier orthogonal expansion of $f$ with respect to the 
orthogonal system $ \CJ_{n,\t}^{\a-s,\b-s}$, 
$$
  f = \sum_{n=0}^\infty \wh \ff_n^{\a-s,\b-s} \CJ_{n,\t}^{\a-s,\b-s} \quad \hbox{with}\quad
       \wh \ff_n^{\a-s,\b-s} = \wh \ff_{n,\t}^{\a-s,\b-s} := \frac{\la f, \CJ_{n,\t}^{\a-s,\b-s}\ra_{\a,\b}^{-s}} { \mathfrak{h}_n^{\a-s,\b-s}}.
$$
The $n$-th partial sum of this expansion is defined by 
$$
  \CS_n^{\a-s,\b-s} f =\CS_{n,\t}^{\a-s,\b-s} f := \sum_{k=0}^n \wh \ff_k^{\a-s,\b-s} \CJ_{k,\t}^{\a-s,\b-s}.
$$
For $s =0$, the operator $\CS_n^{\a,\b} f = S_n^{\a,\b} f$ is the partial sum of the usual Jacobi expansion. 
This operator satisfies several simple properties and can be written, in particular, in terms of the partial sum 
$S_{n-s}^{\a,\b} f$, where we define $S_n^{\a,\b} f =0$ if $n < 0$. 

\begin{lem}
Let $\a,\b > -1$ and $s \in \NN$. For $f \in W_p^s(w_{\a,\b})$,
\begin{enumerate}
\item $\wh \ff_n^{\a-s,\b-s} = f^{(n)}(\t)$ if $0 \le n \le s-1$, and $\wh \ff_n^{\a-s,\b-s} = \wh {\partial^s f}_{n-s}^{\a,\b}$ if $n \ge s$;
\item $\partial^s \CS_n^{\a-s,\b-s} f = S_{n-s}^{\a,\b} f^{(s)}$ if $n \ge s$; 
\item For $n = 0,1, \ldots$,
$$
    \CS_n^{\a-s,\b-s} f(x) = \sum_{k=0}^{\min\{n,s-1\}}f^{(k)}(\t) \frac{(x-\t)^k}{k!} + \int_\t^x \frac{(x-t)^{s-1}}{(s-1)!} S_{n-s}^{\a,\b} f^{(s)}(t) dt.
$$
\end{enumerate}
\end{lem}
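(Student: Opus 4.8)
The plan is to read off everything from the two defining properties \eqref{eq:CJ1a} and \eqref{eq:CJ1b} of the polynomials $\CJ_n^{\a-s,\b-s}$, together with the norm computation in Theorem~\ref{thm:ortho-s}; the three parts are then obtained in order, each feeding the next. The only structural point to keep in mind throughout is the dichotomy $n \le s-1$ versus $n \ge s$ built into the definition \eqref{eq:CJ}, together with the convention $S_m^{\a,\b} f = 0$ for $m < 0$.

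First I would prove part (1) by evaluating the inner product $\la f, \CJ_n^{\a-s,\b-s}\ra_{\a,\b}^{-s}$ directly. When $0 \le n \le s-1$, the polynomial $\CJ_n^{\a-s,\b-s}$ has degree $n < s$, so $\partial^s \CJ_n^{\a-s,\b-s} = 0$ and the integral part of the inner product drops out; the boundary part collapses to $\l_n f^{(n)}(\t)$ by \eqref{eq:CJ1b}, and dividing by $\mathfrak{h}_n^{\a-s,\b-s} = \l_n$ gives $f^{(n)}(\t)$. When $n \ge s$, the boundary derivatives of $\CJ_n^{\a-s,\b-s}$ at $\t$ all vanish by \eqref{eq:CJ1b}, so only the integral survives, and \eqref{eq:CJ1a} turns it into $\la f^{(s)}, J_{n-s}^{\a,\b}\ra_{\a,\b}$; dividing by $\mathfrak{h}_n^{\a-s,\b-s} = h_{n-s}^{\a,\b}$ produces exactly the Jacobi--Fourier coefficient $\wh{\partial^s f}_{n-s}^{\a,\b}$.

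Parts (2) and (3) then follow by differentiating and integrating the partial sum term by term. For (2) I would apply $\partial^s$ to $\CS_n^{\a-s,\b-s} f = \sum_{k=0}^n \wh \ff_k^{\a-s,\b-s}\CJ_k^{\a-s,\b-s}$: the terms with $k \le s-1$ die because $\partial^s$ annihilates a polynomial of degree $<s$, while for $k \ge s$ one has $\partial^s \CJ_k^{\a-s,\b-s} = J_{k-s}^{\a,\b}$ by \eqref{eq:CJ1a}; substituting the coefficient formula from part (1) and reindexing $j = k-s$ yields $\sum_{j=0}^{n-s}\wh{\partial^s f}_j^{\a,\b} J_j^{\a,\b} = S_{n-s}^{\a,\b} f^{(s)}$. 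For (3) I would split the sum defining $\CS_n^{\a-s,\b-s} f$ at $k = s-1$; part (1) identifies the low-order terms as the truncated Taylor polynomial $\sum_{k=0}^{\min\{n,s-1\}} f^{(k)}(\t)(x-\t)^k/k!$, and for the terms with $k \ge s$ I would insert the integral representation \eqref{eq:CJ} of $\CJ_k^{\a-s,\b-s}$ and interchange the (finite) sum with the integral, recognizing the resulting summand as $S_{n-s}^{\a,\b} f^{(s)}$.

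There is no genuine obstacle here: every assertion is a direct consequence of \eqref{eq:CJ1a}, \eqref{eq:CJ1b}, Theorem~\ref{thm:ortho-s} and part (1), and the interchange of summation and integration in (3) is trivially legitimate since the sum is finite. The only points requiring care are bookkeeping: handling the case $n < s$ in (3), where the second block is empty and the convention $S_{n-s}^{\a,\b} f^{(s)} = 0$ makes the integral vanish, so that $\CS_n^{\a-s,\b-s} f$ reduces to the Taylor polynomial of degree $n$; and making sure the Taylor block is cut at $\min\{n, s-1\}$, which coincides with $n$ precisely in that low-degree regime.
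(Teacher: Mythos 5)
Your proposal is correct and takes essentially the same route as the paper: part (1) is read off from \eqref{eq:CJ1a}, \eqref{eq:CJ1b} and Theorem~\ref{thm:ortho-s} exactly as in the paper, and parts (2)--(3) follow by substituting the coefficient formula into the partial sum and using the definition \eqref{eq:CJ}. The only inessential difference is ordering: the paper establishes (3) first (interchanging the finite sum with the integral) and then obtains (2) by applying $\partial^s$ to that identity, whereas you prove (2) directly by termwise differentiation via \eqref{eq:CJ1a} --- the same computation in a different order.
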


\begin{proof}
As in the proof of the Theorem \ref{thm:ortho-s}, it is easy to see that, if $n \le s-1$, then 
$\la f,  \CJ_n^{\a-s,\b-s}\ra_{\a,\b}^{-s}  = \l_n f^{(n)}(\t)$ and  $\hf_n^{\a-s,\b-s}  = \l_n$, whereas if $n \ge s$ then 
$\la f^{(s)}, J_{n-s}^{\a,\b}\ra_{\a,\b}$ and $\hf_n^{\a-s,\b-s} = h_{n-s}^{\a,\b}$ for $n \ge s$, from which (1) for $n \ge s$ 
follows readily. The case for $n \le s-1$ follows similarly. By the definition of the partial sum operator, we then obtain,
for $n \ge s$, 
\begin{align*}
 \CS_n^{\a-s,\b-s} f(x)  = &\ \sum_{k= 0}^{s-1} f^{(k)}(\t) \CJ_{k}^{\a-s,\b-s}(x) + 
        \sum_{k=s}^n \wh {\partial^s f}_k^{\a,\b} \CJ_{k}^{\a-s,\b-s}(x)  \\
        = &\ \sum_{k= 0}^{s-1} f^{(k)}(\t) \frac{(x-\t)^k}{k!} + 
        \sum_{k=s}^n \wh {\partial^s f}_k^{\a,\b} \int_\t^x \frac{(x-t)^{s-1}} {(s-1)!} J_{k-s}^{\a,\b}(t)dt,
\end{align*}
which is, after changing the order of the sum and the integral, exactly the right hand side of (3). 
Finally, taking $s$-th derivative of the identity in (3) proves (2). 
\end{proof}

In \cite{LX}, we extended the Jacobi polynomials to allow the parameters to be negative integers. The extended polynomial,
again denoted by $J_n^{\a,\b}$, satisfies \eqref{eq:diffJ} for all $\a,\b \in \RR$, $n \in \NN$, and its leading coefficient is 
$x^n/n!$, that is, $J_n^{\a,\b}(x)= x^n/n! + \ldots$. Based on this extension of the Jacobi polynomials, we proved in 
\cite{X16} that the
polynomials
$$
  \wh J_n^{-s,\b-s}(t) = \begin{cases} J_n^{-s,\b-s} (t), &  n \ge s, \\
      \displaystyle{ J_n^{-s,\b-s} (t) - \sum_{k=0}^{s-1} J_{n-k}^{-s+k,\b-s+k} (1) \frac{(t-1)^k}{k!}}, & n < s
      \end{cases}
$$ 
are orthogonal with respect to the inner product $\la \cdot,\cdot\ra_{0,\b}^{-s}$ with $\t =1$. Below we prove that
$\wh J_n^{-s,\b-s}(x) \equiv \CJ_n^{-s,\b-s}(x)$ for $n \ge 0$. Indeed, if $n<s$, we use \eqref{eq:diffJ} and the leading 
coefficient of $J_n^{\a,\b}$ to conclude, by the Taylor expansion, that
$$
 \wh J_n^{-s,\b-s}(x) = J_n^{-s,\b-s}(x) - \sum_{k =0}^{n-1} \partial^k J_n^{-s,\b-s}(1) \frac{(x-1)^k}{k!} 
    =  \frac{(x-1)^n}{n!} = \CJ_n^{-s,\b-s}(x), 
$$
whereas if n $\ge s$, the equivalence follows from the identity \cite[(2.9)]{X16}
\begin{align*}
J_n^{-s,\b-s}(x)  = \frac{(-1)^s (n-s)!}{n!} (1-x)^s J_{n-s}^{0,\b}(x), \quad n \ge s,
\end{align*}
and the integral form of the Taylor reminder formula. In fact, it is this equivalence that motivated our definition 
of $\CJ_n^{\a-s,\b-s}$ for all $\a, \b \in \RR$.

\section{Polynomial approximation in Sobolev spaces} \label{sect4}
\setcounter{equation}{0}

We shall show that $\CS_{n,\t}^{\a-s,\b-s}f $ approximates $f$ with the least error, up to a multiple constant, in the 
$W_2^s(w_{\a,\b})$ norm. We also define a near best approximation polynomial that will play the role of $\CS_{n,\t}^{\a,\b} f$ 
when $p\ne 2$. Fix $\t$, we define $\CV_n^{\a-s,\b-s} =\CV_{n,\t}^{\a-s,\b-s}$ for $s =1,2,\ldots$ by 
\begin{equation}\label{eq:approxSP-b}
   \CV_{n,\t}^{\a-s,\b-s}f (x) :=  \sum_{k=0}^{s-1} f^{(k)}(\t) \frac{(x-\t)^k}{k!}+ \int_\t^x \frac{(x-t)^{s-1}}{(s-1)!} 
      V_{n}^{\a,\b} f^{(s)}(t) dt.
\end{equation}
It is easy to see that $\CV_{n,\t}^{\a-s,\b-s}f$ is a polynomial of degree $2n+s$ and it preserves polynomials of 
degree $\le n$. Furthermore, it satisfies
\begin{equation}\label{eq:approxSP-c}
    \partial^s \CV_{n,\t}^{\a-s,\b-s}f (x) = V_{n}^{\a,\b} f^{(s)}(x). 
\end{equation}

To study the approximation property of $\CS_{n,\t}^{\a,\b}f$ or $\CV_{n,\t}^{\a-s,\b-s}f$, we will need the 
weighted Hardy inequality with respect to the Jacobi weight: 

\begin{lem}\label{lem:Hardy}
For $\a,\b > -1$ and $f \in L^p(w_{\a,\b})$, $1 < p < \infty$,  
$$
   \left(\int_{-1}^1  \left (\int_{-1}^x |f(t)| dt \right )^p w_{\a,\b}(x) dx\right)^{1/p}
          \le c \left( \int_{-1}^1 |f(x)|^p w_{\a,\b}(x) dx \right)^{1/p}
$$
if and only if $\b < p-1$. 
\end{lem}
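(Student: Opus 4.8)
The plan is to read the asserted estimate as the weighted Hardy inequality for the averaging operator $Tf(x):=\int_{-1}^x |f(t)|\,dt$, in which the inner and the outer weight coincide and equal $w_{\a,\b}$; since replacing $f$ by $|f|$ changes neither side, it suffices to treat $f\ge 0$. For $1<p<\infty$ the classical Muckenhoupt characterization asserts that $\|Tf\|_{L^p(w_{\a,\b})}\le c\,\|f\|_{L^p(w_{\a,\b})}$ holds for all such $f$ if and only if
\begin{equation*}
  B:=\sup_{-1<r<1}\left(\int_r^1 w_{\a,\b}(x)\,dx\right)^{1/p}\left(\int_{-1}^r w_{\a,\b}(x)^{-1/(p-1)}\,dx\right)^{(p-1)/p}<\infty,
\end{equation*}
where we have used $1-p'=-1/(p-1)$ and $1/p'=(p-1)/p$ with $p'=p/(p-1)$. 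Thus the whole statement, in both directions, reduces to deciding for which $\a,\b$ one has $B<\infty$.

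First I would note that the expression under the supremum in $B$ is continuous in $r$ on every compact subinterval of $(-1,1)$, so finiteness of $B$ is decided entirely by the behaviour as $r\to1^-$ and as $r\to-1^+$. Near the right endpoint $w_{\a,\b}(x)\sim(1-x)^\a$, so $\int_r^1 w_{\a,\b}\sim c(1-r)^{\a+1}\to0$, while the second factor either tends to a finite limit (when $\a<p-1$) or grows like a power of $(1-r)$ (when $\a\ge p-1$); in every case a direct computation shows the product tends to $0$, so the right endpoint imposes no restriction on $\a$. Near the left endpoint $w_{\a,\b}(x)\sim(1+x)^\b$, the first factor converges to the finite constant $\left(\int_{-1}^1 w_{\a,\b}\right)^{1/p}$, so the product is governed by the second factor alone; here $w_{\a,\b}^{-1/(p-1)}\sim(1+x)^{-\b/(p-1)}$ is integrable near $-1$ exactly when $-\b/(p-1)>-1$, i.e.\ $\b<p-1$, in which case $\int_{-1}^r w_{\a,\b}^{-1/(p-1)}\sim c(1+r)^{1-\b/(p-1)}\to0$, whereas for $\b\ge p-1$ the inner integral already diverges and $B=+\infty$. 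Combining the two endpoint analyses yields $B<\infty$ precisely when $\b<p-1$, which is the assertion in both directions.

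The step I expect to require the most care is the right-endpoint balance in $B$: there both factors may individually blow up or vanish as $r\to1$, and one must verify the cancellation $(1-r)^{(\a+1)/p}(1-r)^{(p-1-\a)/p}=(1-r)$ in the regime $\a>p-1$ (with only a harmless logarithm at $\a=p-1$) to be certain that no hidden constraint on $\a$ is produced. If one prefers to avoid quoting Muckenhoupt's theorem, sufficiency can be obtained directly: insert $(1+t)^{-\mu}(1+t)^{\mu}$ into $Tf(x)=\int_{-1}^x|f(t)|\,dt$ with $0<\mu<(p-1)/p$, apply H\"older's inequality in $t$ so that $\int_{-1}^x(1+t)^{-\mu p'}\,dt\le c(1+x)^{1-\mu p'}$, and then integrate in $x$ and use Fubini; the requirement $\mu p'<1$ together with $\b<p-1$ is exactly what keeps the resulting iterated integral finite. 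Necessity is then visible by testing the inequality on the indicators $f_\d=\mathbf{1}_{[-1,-1+\d]}$: a short computation gives left side $\sim\d$ and right side $\sim\d^{(\b+1)/p}$ as $\d\to0^+$, whose quotient is unbounded precisely when $\b>p-1$, the borderline $\b=p-1$ being handled by a logarithmic refinement of the test function.
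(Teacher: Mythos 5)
Your proof is correct and takes essentially the same route as the paper's: both reduce the inequality to the Muckenhoupt condition $\sup_{-1<r<1}\bigl(\int_r^1 w_{\a,\b}(t)\,dt\bigr)^{1/p}\bigl(\int_{-1}^r w_{\a,\b}(t)^{1-q}\,dt\bigr)^{1/q}<\infty$ quoted from the Opic--Kufner reference, and then verify it by endpoint analysis, with the constraint $\b<p-1$ arising only at $x=-1$ and the exact cancellation $(1-r)^{(\a+1)/p}(1-r)^{(p-1-\a)/p}=(1-r)$ showing that $x=1$ imposes no condition on $\a$. Your write-up is in fact slightly more careful than the paper's (which dismisses the right endpoint with ``it is easy to verify''), since you treat the regimes $\a<p-1$, $\a=p-1$, $\a>p-1$ separately and also sketch a self-contained alternative avoiding the quoted characterization.
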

 
\begin{proof}
With $w_{\a,\b}$ replaced by a general weight function $w$, it is known (see, for example, \cite{OK})
that the inequality holds if and only if 
$$
  \sup_{-1<x<1} \left( \int_{x}^1 w(t) dt \right)^{1/p} \left(\int_{-1}^x w(t)^{1-q} dt\right)^{1/q} < \infty, 
$$
where $q = p/(p-1)$. If $-1 < x \le 0$, then the first integral is finite and the second integral is finite if and only if, since 
$1-t \sim 1$, $\b (1-q) > -1$, or $\b < 1/(q-1) = p-1$. If $0 \le x < 1$, then the first integral is bounded by $c (1-x)^{(\a+1)/p}$, 
whereas the second integral is bounded by, since $1+t \sim 1$, $(1-x)^{(\a(1-q)+1)/q}$, it is easy to verify that 
their product is finite. 
\end{proof} 
 
\begin{thm}
Let $\a, \b > -1$ and $s \in \NN$. For $f\in W_p^s(w_{\a,\b})$ if $1 \le p < \infty$, or $f \in C^s[-1,1]$ if $p = \infty$, the
estimate 
\begin{equation} \label{eq:approxSP}
   \| f - \CV_{n,\t}^{\a-s,\b-s} f \|_{W_p^s(w_{\a,\b})} \le c\, E_{n}(f^{(s)})_{L^p(w_{\a,\b})}
\end{equation}
holds under either one of the following assumptions:  
\begin{enumerate}
\item $\t \in (-1,1)$; 
\item $\t =-1$, $ \b < p-1$ if $1< p \le \infty$ and $\b \le 0$ if $p=1$;
\item $\t=1$, $\a < p-1$ if $1< p \le \infty$ and $\a \le 0$ if $p=1$.
\end{enumerate}
\end{thm}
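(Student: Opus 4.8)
The plan is to reduce the full Sobolev estimate \eqref{eq:approxSP} to the single $L^p$ near-best bound \eqref{eq:Jacobi-near-best} for $f^{(s)}$ together with a weighted Hardy inequality. Write $g := \CV_{n,\t}^{\a-s,\b-s} f$ and $F := f^{(s)} - V_n^{\a,\b} f^{(s)}$, so that $\|F\|_{L^p(w_{\a,\b})} \le c\, E_n(f^{(s)})_{L^p(w_{\a,\b})}$ by \eqref{eq:Jacobi-near-best}. Since
\[
  \|f-g\|_{W_p^s(w_{\a,\b})}^p = \sum_{k=0}^s \|f^{(k)} - \partial^k g\|_{L^p(w_{\a,\b})}^p
\]
(with the obvious modification for $p=\infty$), it suffices to bound each summand by $c\,E_n(f^{(s)})_{L^p(w_{\a,\b})}$. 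The top term $k=s$ is immediate: by \eqref{eq:approxSP-c}, $f^{(s)} - \partial^s g = F$, which is already controlled.

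The key structural step is a clean Taylor cancellation for $0 \le k \le s-1$. Differentiating the definition \eqref{eq:approxSP-b} $k$ times, the polynomial part of $g$ differentiates to $\sum_{j=k}^{s-1} f^{(j)}(\t)(x-\t)^{j-k}/(j-k)!$, which is exactly the degree $s-1-k$ Taylor polynomial of $f^{(k)}$ at $\t$, while the integral part differentiates to $\int_\t^x (x-t)^{s-1-k}/(s-1-k)!\, V_n^{\a,\b} f^{(s)}(t)\,dt$. Comparing with the integral form of Taylor's theorem for $f^{(k)}$ at $\t$, the polynomial terms cancel identically, leaving
\[
  f^{(k)}(x) - \partial^k g(x) = \int_\t^x \frac{(x-t)^{s-1-k}}{(s-1-k)!}\, F(t)\,dt.
\]
Since $|x-t|\le 2$ on $[-1,1]$, this is dominated pointwise by $c\,\big|\int_\t^x |F(t)|\,dt\big|$, so the whole problem collapses, uniformly in $k$, to the single estimate $\big\| \int_\t^x |F(t)|\,dt\big\|_{L^p(w_{\a,\b})} \le c\,\|F\|_{L^p(w_{\a,\b})}$.

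It then remains to verify this Hardy-type bound under each hypothesis. For $\t = -1$ this is precisely \lemref{lem:Hardy}, valid for $1<p<\infty$ exactly when $\b < p-1$; the case $\t=1$ follows by the substitution $x\mapsto -x$, which interchanges $\a$ and $\b$ and yields the condition $\a<p-1$. For $p=\infty$ the bound is trivial, since $\int_{-1}^1 |F|\,dt \le 2\|F\|_\infty$, consistent with the vacuous parameter conditions there. For $p=1$ the inequality is \emph{not} covered by \lemref{lem:Hardy}, so I would instead apply Fubini to write $\int_{-1}^1\big(\int_{-1}^x |F|\,dt\big)w_{\a,\b}(x)\,dx = \int_{-1}^1 |F(t)|\big(\int_t^1 w_{\a,\b}(x)\,dx\big)dt$; the required pointwise bound $\int_t^1 w_{\a,\b} \le c\,w_{\a,\b}(t)$ holds near $t=-1$ exactly when $\b\le 0$ (and automatically near $t=1$), matching the stated hypotheses, with the symmetric condition $\a\le 0$ for $\t=1$.

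Finally, for interior $\t\in(-1,1)$ I would split $[-1,1]=[-1,\t]\cup[\t,1]$ and apply the Hardy inequality on each piece with the integration anchored at the interior point $\t$. I expect the main obstacle to lie precisely here: one must confirm that integrating \emph{toward} a singular endpoint starting from the interior is admissible for \emph{every} $\a,\b>-1$. Checking the Muckenhoupt condition, the critical product of weight integrals behaves like $(1-x)^{(\a+1)/p}$ times a factor that either stays bounded (when $\a<p-1$) or grows like $(1-x)^{(\a(1-q)+1)/q}$ (when $\a\ge p-1$); in the latter regime the two exponents sum to exactly $1$, so the product vanishes like $(1-x)$ at the endpoint and no restriction on the parameters survives. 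The remaining care is to handle the borderline exponents ($\a=p-1$, where a logarithm appears) and to treat the $p=1$ variant of the interior case by the same Fubini computation, so that all three hypotheses are covered uniformly.
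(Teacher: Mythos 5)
Your proposal is correct and follows essentially the same route as the paper: reduce to the top derivative via \eqref{eq:approxSP-c}, use the integral form of Taylor's theorem to write $f^{(k)}-\partial^k\CV_{n,\t}^{\a-s,\b-s}f$ as an integral of $f^{(s)}-V_n^{\a,\b}f^{(s)}$ against a bounded kernel, and then invoke Hardy-type estimates (Lemma \ref{lem:Hardy} for $\t=\pm1$ with $1<p<\infty$, a Fubini argument with the pointwise weight bound for $p=1$, and a direct check for interior $\t$). The minor differences (verifying the Muckenhoupt condition for interior $\t$ rather than the paper's H\"older-plus-direct-integral bound, and obtaining $\t=1$ by reflection rather than the dual Hardy inequality) are execution details within the same argument.
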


\begin{proof}
By the definition of the $\|\cdot\|_{W_p^s(w_{\a,\b})}$ norm, we need to show that
\begin{equation} \label{eq:approxSP-a}
 \| \partial^k f -\partial^k \CV_{n,\t}^{\a-s,-s} f \|_{L^p(w_{\a,\b})} \le c\,  E_n(f^{(s)})_{L^p(w_{\a,\b})}, \quad 0 \le k \le s.
\end{equation}
Since $\partial^s f - \partial^s \CV_{n,\t}^{\a-s,\b-s} f = f^{(s)} - V_{n}^{\a,\b}f^{(s)}$, the estimate \eqref{eq:approxSP-a} 
when $k =s$ follows immediately. By the Taylor reminder formula, we can write 
$$
  f(x) = \sum_{k=0}^{s-1} f^{(k)}(\t) \frac{(x-\t)^k}{k!} + \int_\t^x \frac{(x-t)^{s-1}}{(s-1)!} f^{(s)}(t) dt,
$$
so that 
$$
  f(x) - \CV_{n,\t}^{\a-s,\b-s} f (x) =  \int_\t^x \frac{(t-\t)^{s-1}}{(s-1)!} \left[ f^{(s)}(t) - V_{n}^{\a,\b}  f^{(s)}(t) \right] dt
$$
Taking $k$-th derivatives, $1 \le k \le s-1$, only reduces the power of $(t-\t)^{s-1}$ by $k$. Since we need to consider
$k =0,1,\ldots, s-1$, we can ignore the factor $(t-\t)^{s-k-1}$. For $p = \infty$, the estimate \eqref{eq:approxSP-a} follows
trtivially from the above identity. 

For $p > 1$, applying the H\"older inequality, we obtain
\begin{align*}
  &\left \| f- \CV_{n,\t}^{\a-s, \b-s} f \right\|_{L^p(w_{\a,\b})}^p  \le
      \int_{-1}^1 \left | \int_\t^x |f^{(s)}(t) - V_{n}^{\a,\b} f^{(s)}(t)|dt \right|^p  w_{\a,\b}(x) dx\\
   & \qquad \qquad \le \int_{-1}^1 \left| \int_\t^x | f(t)- V_{n}^{\a, \b} f (t)|^p w_{\a,\b}(t) dt\right| \cdot 
       \left | \int_\t^x w_{\a,\b}(t)^{-q/p} dt \right|^{p/q}  w_{\a,\b}(x) dx \\
   & \qquad \qquad
      \le  \left \|f^{(s)} - V_{n}^{\a,\b} f^{(s)}\right\|_{L^p(w_{\a,\b})}^p  \int_{-1}^1 \left |\int_\t^x w_{\a,\b}(t)^{-q/p} dt \right|^{p/q}
          w_{\a,\b}(x) dx. 
 \end{align*}
If $\t \in (-1,1)$, it is easy to see, since $w_{\a,\b}$ has no singularity at $\t$, that the last integral is bounded by a constant 
for all $p > 1$. If $ \t = -1$, the inequality follows from Lemma \ref{lem:Hardy} and the case $\t = 1$ follows likewise
from the dual Hardy inequality (\cite{OK}). A direct proof can also be easily carried out by dividing the integral over 
$[-1,1]$ into two integrals over $[-1,0]$ and $[0,1]$, respectively, so that 
\begin{align*}
 \int_{-1}^1 \left |\int_{-1}^x w_{\a,\b}(t)^{-q/p} dt \right|^{p/q} w_{\a,\b}(x) dx 
    \le &\ c \int_{-1}^0 \left |\int_{-1}^x (1+t)^{-\b q/p} dt \right|^{p/q} w_{\a,\b}(x) dx \\
       & + \int_{0}^1 \left |\int_{-1}^x w_{\a,\b}(t)^{-q/p} dt \right|^{p/q} w_{\a,\b}(x) dx; 
\end{align*}
the first term in the right hand side is bounded if $ - \b q/p > -1$ or $\b < p/q = p-1$, whereas the second term can be seen
to be bounded, after splitting the inner integral as a sum of two, one over $[-1,0]$ and the other over $[0,x]$. The case $\t =1$
works similarly. 

For $ p =1$, we divide the integral into two terms and exchanging the orders of the integrals in each term,
\begin{align*}
   \left \| f- \CV_{n,\t}^{\a-s, \b-s} f \right\|_{L^1(w_{\a,\b})}  \le &   \left(  \int_{-1}^\t  \int_x^\t  +  \int_{\t}^1  \int_\t^x  \right)
        \left|f^{(s)}(t) - V_{n}^{\a,\b} f^{(s)}(t)\right| dt 
        w_{\a,\b}(x) dx \\
      =  & \int_{-1}^\t  \left|f^{(s)}(t) - V_{n}^{\a,\b} f^{(s)}(t)\right| \int_{-1}^t w_{\a,\b}(x)dx dt  \\ 
   & +   \int_{\t}^1  \left|f^{(s)}(t) - V_{n}^{\a,\b} f^{(s)}(t)\right|  \int_t^1  w_{\a,\b}(x) dx dt.   
\end{align*}
For $ \t \in (-1,1)$, $w_{\a,\b}(x) \sim (1-x)^\a$ in the first term in the right hand side, which implies that 
$\int_{-1}^t w_{\a,\b}(x)dx \le c w_{\a,\b}(t)$, whereas $w_{\a,\b}(x) \sim (1+x)^\b$ in the second term, which implies that
$\int_t^1  w_{\a,\b}(x) dx \le c w_{\a,\b}(t)$. This proves the case for $\t \in (-1,1)$. For $\t = -1$, there is only the second term,
for which we use $(1+x)^\b \le (1+t)^\b$ for $\b \le 0$ to conclude that $\int_t^1  w_{\a,\b}(x) dx \le c w_{\a,\b}(t)$ for $\b \le 0$.
The case $\t= 1$ is proved similarly. The proof is completed. 
\end{proof}
 
Evidently, Theorem \ref{thm:1.1} follows from this theorem with any $\t \in (0,1)$. In the case of $p=2$, we 
can replace $\CV_{n,\t}^{\a-s,\b-s}f $ by the partial sum operator $\CS_n^{\a-s,\b-s} f$.  
 
\begin{cor}
Let $\a, \b > -1$, $s \in \NN$. For $f\in W_2^s(w_{\a,\b})$, the estimate 
\begin{align} \label{eq:approxSP2}
   \| f - \CS_{n,\t}^{\a-s,\b-s} f \|_{W_2^s(w_{\a,\b})}  \le  c \, E_{n-s}(f^{(s)})_{L^2(w_{\a,\b})} \end{align}
holds if (a) $\t \in (-1,1)$, or (b) $\t =-1$ and $ \b < 1$, or (c) $\t=1$ and $\a < 1$. 
\end{cor}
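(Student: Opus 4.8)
The plan is to observe that the partial-sum operator $\CS_{n,\t}^{\a-s,\b-s}f$ has exactly the same structure as the near best operator $\CV_{n,\t}^{\a-s,\b-s}f$ in \eqref{eq:approxSP-b}, with $V_n^{\a,\b}$ replaced by the $L^2$ partial sum $S_{n-s}^{\a,\b}$. Indeed, part (3) of the Lemma in Section 3 gives
\[
  \CS_{n,\t}^{\a-s,\b-s} f(x) = \sum_{k=0}^{\min\{n,s-1\}} f^{(k)}(\t)\frac{(x-\t)^k}{k!} + \int_\t^x \frac{(x-t)^{s-1}}{(s-1)!}\, S_{n-s}^{\a,\b} f^{(s)}(t)\,dt,
\]
which is identical to \eqref{eq:approxSP-b} once $V_n^{\a,\b}f^{(s)}$ is replaced by $S_{n-s}^{\a,\b}f^{(s)}$. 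Consequently the entire argument that proves \eqref{eq:approxSP} in the preceding theorem transfers verbatim, since its only quantitative input is an $L^p$ bound on the top-order error $f^{(s)}-\partial^s\CV_{n,\t}^{\a-s,\b-s}f$.

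First I would dispose of the top derivative. Taking the $s$-th derivative of the displayed formula and using part (2) of the same Lemma gives $\partial^s f - \partial^s \CS_{n,\t}^{\a-s,\b-s}f = f^{(s)} - S_{n-s}^{\a,\b}f^{(s)}$, and since $p=2$ the least-square property \eqref{eq:least} upgrades this to the \emph{equality}
\[
  \|f^{(s)} - S_{n-s}^{\a,\b}f^{(s)}\|_{L^2(w_{\a,\b})} = E_{n-s}(f^{(s)})_{L^2(w_{\a,\b})},
\]
which is precisely the $k=s$ case of \eqref{eq:approxSP2}, with constant $1$. For $0\le k\le s-1$ I would repeat the H\"older/Hardy computation from the proof of the theorem word for word, replacing $f^{(s)}-V_n^{\a,\b}f^{(s)}$ everywhere by $f^{(s)}-S_{n-s}^{\a,\b}f^{(s)}$. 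Writing $f-\CS_{n,\t}^{\a-s,\b-s}f$ via the Taylor remainder as an $s$-fold integral of $f^{(s)}-S_{n-s}^{\a,\b}f^{(s)}$, taking $k$ derivatives only lowers the power of $(x-\t)$, so the same estimate \eqref{eq:approxSP-a} controls $\|\partial^k(f-\CS_{n,\t}^{\a-s,\b-s}f)\|_{L^2(w_{\a,\b})}$ by $c\,\|f^{(s)}-S_{n-s}^{\a,\b}f^{(s)}\|_{L^2(w_{\a,\b})}$, hence by $c\,E_{n-s}(f^{(s)})_{L^2(w_{\a,\b})}$ via the equality above. With $p=q=2$ the governing integral $\int_{-1}^1\bigl|\int_\t^x w_{\a,\b}(t)^{-1}dt\bigr|\,w_{\a,\b}(x)\,dx$ is finite exactly under the three stated hypotheses: for $\t\in(-1,1)$ it is bounded for every weight, since $w_{\a,\b}$ is non-singular at $\t$; for $\t=-1$ the Hardy inequality of Lemma~\ref{lem:Hardy} requires $\b<p-1=1$; and for $\t=1$ the dual Hardy inequality requires $\a<1$.

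Since the argument is a direct transcription of the theorem's proof, there is no genuine obstacle. The one point to verify is that substituting the $L^2$ partial sum for the near best operator does not affect the applicability of the Hardy estimate, and indeed it does not, because that estimate depends only on the weight and on the value of $p$, not on which polynomial approximant to $f^{(s)}$ is used. The mild gains over \eqref{eq:approxSP} are that \eqref{eq:least} turns the top-order bound into an equality and replaces $E_n(f^{(s)})$ by the slightly sharper $E_{n-s}(f^{(s)})$, reflecting that $\partial^s\CS_{n,\t}^{\a-s,\b-s}f = S_{n-s}^{\a,\b}f^{(s)}$ has degree $n-s$.
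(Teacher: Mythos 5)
Your proposal is correct and is precisely the argument the paper intends for this corollary: since part (3) of the Lemma in Section \ref{sect3} shows $\CS_{n,\t}^{\a-s,\b-s}$ is $\CV_{n,\t}^{\a-s,\b-s}$ with $V_n^{\a,\b}$ replaced by the least-squares projection $S_{n-s}^{\a,\b}$, the proof of \eqref{eq:approxSP} transfers word for word, with \eqref{eq:least} supplying the bound $\|f^{(s)}-S_{n-s}^{\a,\b}f^{(s)}\|_{L^2(w_{\a,\b})}=E_{n-s}(f^{(s)})_{L^2(w_{\a,\b})}$ and the conditions (a)--(c) being exactly the theorem's hypotheses specialized to $p=2$.
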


In particular, if $f \in W_2^r(w_{\a,\b})$ for $r \ge s$, then we have
$$
    \| f - \CS_{n}^{\a-s,\b-s} f \|_{W_2^s(w_{\a,\b})} \le c \,n^{-r+s} E_{n-r}(f^{(r)})_{L^2(w_{\a+r-s,\b+r-s})}
$$
and the righthand is also bounded by $c\, n^{-r+s} \|\phi^{r-s} f^{(r)}\|_{L^p(w_{\a,\b})}$ by Theorem \ref{thm:Jackson}. 

\section{Simultaneous Approximation}
\setcounter{equation}{0}

We now consider the simultaneous approximation by polynomials. Our goal is to establish the estimate of the type 
$$
  \| \partial^k f - \partial^k \CV_{n,\t}^{\a-s,\b-s} f \|_{L^p(w_{\a,\b})} \le c \, n^{-s+k} E_n(f^{(s)})_{L^p(w_{\a,\b})},
  \quad 0 \le k \le s. 
$$
 Our result in the previous section shows that this estimate holds for $k=s$. We apply a technique called 
Aubin-Nitsche duality (see \cite[p. 321]{CHQZ} and \cite{CQ}) to handle the case $0 \le k \le s-1$. For this, we need to work
with either $\t = -1$ or $\t =1$. We mainly work with $\t =-1$, since the other case is similar. 

Let $g$ be a measurable function on $[-1,1]$. For $ 0\le k \le s-1$, we define a function 
\begin{equation}\label{eq:ug}
  u_{g,k}(x):= \int_{-1}^x \frac{(x-t)^{s-1}}{(s-1)!} \frac{1}{w_{\a,\b}(t)}  \int_t^1 \frac{(y-t)^{s-k-1}}{(s-k-1)!} g(y) w_{\a,\b}(y) dy dt. 
\end{equation}
It is straightforward to verify that $u_{g,k}$ is the solution of the following boundary value problem of a $(2s-k)$-th 
order differential equation:
\begin{equation} \label{eq:diffq}
\begin{split}
 & (-1)^{s-k} [w_{\a,\b}(x)]^{-1}  \frac{d^{s-k}}{dx^{s-k}} \left(w_{\a,\b}(x) u^{(s)}(x) \right)  = g (x) , \quad -1 \le x \le 1, \\
 & \qquad u^{(j)} (-1) =0, \quad 0\le j \le s-1, \\
 &  \qquad \lim_{x\to 1}   \frac{d^j}{dx^j} \left(w_{\a,\b}(x) u^{(s)}(x)\right)  =0, \quad 0\le j \le s-k-1. 
\end{split}
\end{equation} 


\begin{lem} \label{lem:int-g}
If $v \in W_p^s$ satisfies $v^{(j)}(-1) =0$ for $0 \le j \le s-k-1$, then 
$$
  \int_{-1}^1 u_{g,k}^{(s)}(x) v^{(s-k)}(x) w_{\a,\b}(x) dx = \int_{-1}^1 g(x) v(x) w_{\a,\b}(x) dx. 
$$
\end{lem}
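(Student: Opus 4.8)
The plan is to prove the identity by substituting the explicit formula \eqref{eq:ug} for $u_{g,k}$ into the left-hand side and integrating by parts repeatedly, using the boundary conditions on $v$ to kill all boundary terms until we arrive at the right-hand side. First I would compute $u_{g,k}^{(s)}$ directly: since $u_{g,k}(x)$ is the $s$-fold iterated integral of the inner function against the kernel $(x-t)^{s-1}/(s-1)!$, applying $\partial^s$ collapses the outer Taylor-type kernel and yields
\begin{equation*}
  u_{g,k}^{(s)}(x) = \frac{1}{w_{\a,\b}(x)} \int_x^1 \frac{(y-x)^{s-k-1}}{(s-k-1)!} g(y) w_{\a,\b}(y)\, dy.
\end{equation*}
This is the key simplification, and it makes the weight $w_{\a,\b}(x)$ cancel against the weight in the integrand of the left-hand side, leaving
\begin{equation*}
  \int_{-1}^1 u_{g,k}^{(s)}(x) v^{(s-k)}(x) w_{\a,\b}(x)\, dx
    = \int_{-1}^1 v^{(s-k)}(x) \int_x^1 \frac{(y-x)^{s-k-1}}{(s-k-1)!} g(y) w_{\a,\b}(y)\, dy\, dx.
\end{equation*}

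Next I would exchange the order of integration via Fubini, writing the double integral over the region $-1 \le x \le y \le 1$, so that the inner integral becomes $\int_{-1}^y \frac{(y-x)^{s-k-1}}{(s-k-1)!} v^{(s-k)}(x)\, dx$. The plan is then to recognize this inner integral, for each fixed $y$, as exactly the Taylor remainder representation of $v$: since $v^{(j)}(-1) = 0$ for $0 \le j \le s-k-1$, the integral form of the Taylor remainder formula (the same one invoked in the previous sections) gives
\begin{equation*}
  \int_{-1}^y \frac{(y-x)^{s-k-1}}{(s-k-1)!} v^{(s-k)}(x)\, dx = v(y).
\end{equation*}
Substituting this back collapses the double integral to $\int_{-1}^1 g(y) v(y) w_{\a,\b}(y)\, dy$, which is the desired right-hand side after renaming $y$ to $x$.

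I expect the main obstacle to be purely bookkeeping rather than conceptual: getting the two iterated-kernel computations right and confirming that the boundary conditions stated on $v$ are precisely what is needed to make the Taylor remainder identity apply. In particular I would double-check that it is the hypothesis $v^{(j)}(-1)=0$ for $0 \le j \le s-k-1$ that matches the order $s-k$ of the remainder kernel, and that no endpoint contributions arise at $y=1$ (none do, because the Fubini swap is clean and the $y=1$ endpoint enters only through the value $v(1)$, which is not differentiated). An alternative route is to integrate by parts $s-k$ times directly, transferring derivatives from $v^{(s-k)}$ onto the weighted inner integral and invoking the differential equation \eqref{eq:diffq} that $u_{g,k}$ solves; this would reproduce the factor $(-1)^{s-k}[w_{\a,\b}]^{-1}\partial^{s-k}(w_{\a,\b} u^{(s)}) = g$ and again require the boundary conditions on $v$ to discard the boundary terms, but the Fubini-plus-Taylor argument is cleaner and avoids manipulating the higher-order operator explicitly.
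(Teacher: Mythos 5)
Your proof is correct, but it takes a genuinely different route from the paper's. The paper proves the identity by starting from the right-hand side, replacing $g$ by $(-1)^{s-k}[w_{\a,\b}]^{-1}\tfrac{d^{s-k}}{dx^{s-k}}\bigl(w_{\a,\b}\,u_{g,k}^{(s)}\bigr)$ via the differential equation \eqref{eq:diffq}, and then integrating by parts $s-k$ times, using the boundary conditions of $w_{\a,\b}u_{g,k}^{(s)}$ at $x=1$ and of $v$ at $x=-1$ to discard the boundary terms --- that is, exactly the ``alternative route'' you sketch at the end. Your primary argument instead works directly from the explicit formula \eqref{eq:ug}: you collapse the outer kernel to get $u_{g,k}^{(s)}(x)=[w_{\a,\b}(x)]^{-1}\int_x^1 \tfrac{(y-x)^{s-k-1}}{(s-k-1)!}\,g(y)\,w_{\a,\b}(y)\,dy$ (the same computation the paper performs at the start of the following lemma), cancel the weight, swap the order of integration over the triangle $-1\le x\le y\le 1$, and identify the inner integral $\int_{-1}^y \tfrac{(y-x)^{s-k-1}}{(s-k-1)!}\,v^{(s-k)}(x)\,dx$ with $v(y)$ by Taylor's formula with integral remainder, the hypothesis $v^{(j)}(-1)=0$ for $0\le j\le s-k-1$ exactly killing the polynomial part of order $s-k$. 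This is a fair trade: your route never needs to verify that $u_{g,k}$ solves the boundary value problem \eqref{eq:diffq} (the conditions at $x=1$ are built into the explicit formula and absorbed by the Fubini swap), and it replaces $s-k$ integrations by parts with a single exchange of integrals; the paper's route is shorter on the page because \eqref{eq:diffq} has already been recorded, and it makes visible the Green's-function/duality structure that motivates the Aubin--Nitsche argument. One shared caveat: when $k=0$ the factor $v^{(s)}$ lies only in $L^p(w_{\a,\b})$, so your Fubini step (like the paper's integration by parts) implicitly assumes absolute convergence of the double integral; this holds for the $g\in L^q(w_{\a,\b})$ and the $v$ to which the lemma is subsequently applied, so it is a matter of stating the standing assumptions rather than a flaw in the argument.
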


\begin{proof}
By the differential equation in \eqref{eq:diffq}, 
\begin{align*}
 \int_{-1}^1 g(x) v(x) w_{\a,\b}(x) dx = (-1)^{s-k} \int_{-1}^1 \frac{d^{s-k}}{dx^{s-k}} \left(w_{\a,\b}(x) u_{g,k}^{(s)}(x) \right) v(x) dx.
\end{align*}
With the help of the boundary conditions of $u_{g,k}^{(s)}$ at $x=1$ and $v$ at $-1$, successive integration by parts proves 
the stated identity. 
\end{proof}

Throughout the rest of the paper, we let $p$ and $q$ be related by $\f1{p} + \frac1{q} =1$. 

\begin{lem}
Let $\a > -1$ and $g \in L^q(w_{\a,\b})$, $ 1 \le q \le \infty$. Then, for $1 \le q \le \infty$, 
\begin{equation}\label{eq:ug<g}
 \|\phi^{s-k} u_{g,k}^{(2s-k)}\|_{L^q(w_{\a,\b})} \le c \|g\|_{L^q(w_{\a,\b})}, \quad 0 \le k \le s-1, 
\end{equation}
provided (a) $\b =0$ or (b) $s=1$, $\b < p/2-1$ if $1 \le q < \infty$ and $\b \le -1/2$ if $q=\infty$.  
\end{lem}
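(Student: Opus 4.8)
The plan is to estimate the $(2s-k)$-th derivative of $u_{g,k}$ directly from its defining double-integral representation \eqref{eq:ug}, differentiating under the integral signs and then applying the weighted Hardy inequality of Lemma \ref{lem:Hardy} in a form adapted to the Jacobi weight. Observe that $u_{g,k}^{(s)}(x) = [w_{\a,\b}(x)]^{-1} \int_x^1 \frac{(y-x)^{s-k-1}}{(s-k-1)!} g(y) w_{\a,\b}(y)\, dy$, obtained by differentiating the outer $s$-fold integral in \eqref{eq:ug}. The factor $[w_{\a,\b}]^{-1}$ is the source of all difficulty, since differentiating it $s-k$ more times produces negative powers of $(1-x)$ and $(1+x)$ that must be controlled against $\phi^{s-k} = (1-x^2)^{(s-k)/2}$ and the weight in the $L^q$ norm.

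**First I would** carry out the $(s-k)$ differentiations of $w_{\a,\b}(x) u_{g,k}^{(s)}(x) = \int_x^1 \frac{(y-x)^{s-k-1}}{(s-k-1)!} g(y) w_{\a,\b}(y)\, dy$ with respect to $x$. Each derivative lowers the power $(y-x)^{s-k-1}$, and after $s-k-1$ differentiations one arrives at $\int_x^1 g(y) w_{\a,\b}(y)\, dy$; the final derivative removes the integral and contributes a boundary term together with the derivative of the weight acting on $u_{g,k}^{(s)}$. **Then I would** reorganize, via the product rule, $u_{g,k}^{(2s-k)} = [w_{\a,\b}]^{-1}\,\partial^{s-k}(w_{\a,\b} u_{g,k}^{(s)}) + (\text{lower-order corrections involving } \partial^j[w_{\a,\b}^{-1}])$, and use the differential equation in \eqref{eq:diffq} to identify the leading piece with $\pm g$. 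The correction terms are where the parameter restrictions enter.

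**The hard part will be** controlling these correction terms in the weighted $L^q$ norm. Under assumption (a), $\b = 0$ so that $w_{\a,0}(t) = (1-t)^\a$ has no singularity at the left endpoint $-1$, and the Hardy-type estimate near $x=-1$ costs nothing; near $x=1$ the factor $\phi^{s-k}$ supplies exactly the power of $(1-x)$ needed to absorb the negative powers generated by differentiating $(1-x)^{-\a}$. Under assumption (b), where $s=1$ forces $k=0$, there is a single differentiation and the relevant operator is $\int_x^1 g(y) w_{\a,\b}(y)\,dy$ weighted by $\phi\, [w_{\a,\b}]^{-1}$; here I would apply the Hardy inequality directly, and the condition $\b < p/2 - 1$ (equivalently $\b\, q/p < q/2 - q/p$, matching the $\phi = (1-x^2)^{1/2}$ gain) is precisely the threshold making the weighted operator bounded on $L^q(w_{\a,\b})$, with the endpoint $\b \le -1/2$ handled separately when $q = \infty$. **I would** split the integral over $[-1,0]$ and $[0,1]$ exactly as in the proof of the preceding theorem, so that on each half only one of the two singular factors of $w_{\a,\b}$ is active, reducing both cases to a one-sided weighted Hardy estimate whose boundedness criterion is the stated inequality on $\b$.
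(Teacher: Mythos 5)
Your plan follows the paper's proof almost step for step: differentiate \eqref{eq:ug} to get $u_{g,k}^{(s)}(x)=[w_{\a,\b}(x)]^{-1}\int_x^1 \tfrac{(y-x)^{s-k-1}}{(s-k-1)!}g(y)w_{\a,\b}(y)\,dy$, apply the product rule to the remaining $s-k$ derivatives so that the leading term is $\pm g$ and the corrections involve $\partial^j\big(w_{\a,\b}^{-1}\big)$ acting against $\int_x^1\tfrac{(y-x)^{j-1}}{(j-1)!}g(y)w_{\a,\b}(y)\,dy$, then estimate the corrections in $L^q(w_{\a,\b})$, splitting $[-1,0]$ and $[0,1]$. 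Your treatment of case (a) is in line with the paper, which carries out the estimate by H\"older's inequality for $1<q<\infty$, Fubini for $q=1$, and direct inspection for $q=\infty$, rather than by invoking a Hardy inequality.

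Your case (b), however, contains a genuine error. You reduce to the operator ``$\int_x^1 g(y)w_{\a,\b}(y)\,dy$ weighted by $\phi\,[w_{\a,\b}]^{-1}$'' and claim that $\b<p/2-1$ is exactly its boundedness threshold. It is not: since $\tfrac{d}{dx}\big(w_{\a,\b}(x)\big)^{-1}=\big(\a(1-x)^{-1}-\b(1+x)^{-1}\big)\big(w_{\a,\b}(x)\big)^{-1}$, the correction term in $u_{g,0}''$ carries an extra factor $(1+x)^{-1}$ near $x=-1$, and that factor is the entire source of the restriction on $\b$. Concretely, at $q=\infty$ with $g\equiv 1$: your operator behaves like $(1+x)^{1/2-\b}$ near $x=-1$, bounded for every $\b\le 1/2$, whereas the true correction behaves like $(1+x)^{-1/2-\b}$, bounded only when $\b\le-1/2$; for finite $q$ your operator is bounded whenever $\b<p/2+p-1$, a condition much weaker than the stated $\b<p/2-1$. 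So the computation you describe would neither produce nor explain the threshold that is the heart of the lemma; the fix is simply to keep the factor $\a(1-x)^{-1}-\b(1+x)^{-1}$ in the reduction, as the paper does. A secondary point: Lemma \ref{lem:Hardy} is stated for the operator $f\mapsto\int_{-1}^x|f(t)|\,dt$ with the same weight $w_{\a,\b}$ on both sides, so it does not apply literally to the operators arising here (integration from $x$ to $1$, weight inside the integral, singular prefactor outside); you would need the dual two-weight Muckenhoupt-type criterion from \cite{OK}, or else the paper's bare-hands H\"older/Fubini estimates.
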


\begin{proof}
By \eqref{eq:ug}, taking $s$-th derivative gives 
$$
 u_{g,k}^{(s)} (x):=  \frac{1}{w_{\a,\b}(x)}  \int_x^1 \frac{(y-x)^{s-k-1}}{(s-k-1)!} g(y) w_{\a,\b}(y) dy. 
$$
Taking another $(s-k)$-th order derivatives by the product rule, we conclude that 
$$
  u_{g,k}^{(2s-k)}(x) = -g(x) + \sum_{j=1}^{s-k} \binom{s-k}{j} \frac{d^j}{dx^j}\left(\frac{1}{w_{\a,\b}(x)}\right)
          \int_x^1 \frac{(y-x)^{j-1}}{(j-1)!} g(y) w_{\a,\b}(y) dy.
$$
If $\b = 0$, then $\left| \frac{d^j}{dx^j}\left(\frac{1}{w_{\a,0}(x)}\right)\right| (1-x)^{j-1} = |(-a)_j| (1-x)^{-\a-1}$. Hence, by
$(y-x)^{j-1} \le (1-x)^{j-1}$ for $x \le y\le 1$, it follows that 
$$
\left| u_{g,k}^{(2s-k)}(x) \right| \le |g(x)| + c \sum_{j=1}^{s-k} \frac{1}{(1-x)^{\a+1}}  \int_x^1 |g(y)| w_{\a,0}(y) dy.
$$
If $q = \infty$, it follows immediately that $\|u_{g,k}^{(2s-k)}\|_\infty \le c \|g\|_\infty$. For $1 < q < \infty$, 
by the H\"older inequality and $\phi(x)^{s-k} \le c (1-x)^{1/2}$ as $0\le k \le s-1$, 
\begin{align*}
 &    \int_{-1}^1 \left| \frac{\phi(x)^{s-k}}{(1-x)^{\a+1}} \int_x^1 g(y) w_{\a,0}(y) dy \right|^q w_{\a,0}(x) dx \\ 
 &  \qquad \le c \int_{-1}^1 \int_x^1 | g(y)|^q w_{\a,0}(y) dy \left| \int_x^1w_{\a,0}(t)dt \right|^{q/p} \frac{w_{\a,0}(x)}{(1-x)^{(\a+1/2)q}} dx \\
 &  \qquad \le c \int_{-1}^1 | g(y)|^q w_{\a,\b}(y) dy   \int_{-1}^1 (1-x)^{q/2-1}dx \le c \|g\|_{L^q(w_{\a,0})}^q,
\end{align*}
whereas for $q=1$, exchanging the order of integration shows that 
\begin{align*}
 &    \int_{-1}^1 \left| \frac{\phi(x)^{s-k}}{(1-x)^{\a+1}} \int_x^1 g(y) w_{\a,0}(y) dy \right|  w_{\a,0}(x) dx \\ 
 &  \qquad \le c \int_{-1}^1 | g(y)| w_{\a,0}(y) \int_{-1}^y (1-x)^{\a+1/2}{ (1-x)^{-\a-1}} dx dy \le c \|g\|_{L^1(w_{\a,0})}.
\end{align*}
Putting this together, we have completed proof of \eqref{eq:ug<g} under (a). 

We now prove \eqref{eq:ug<g} under the condition in (b). Since $s =1$, $k$ has to be zero. Since 
$$
  u_{g,0}''(x) = \frac{d}{dx} (w_{\a,\b}(x))^{-1}\int_x^1 g(y) w_{\a,\b}(y)dy - g(x)
$$
and $ \frac{d}{dx} (w_{\a,\b}(x))^{-1} = (\a(1-x)^{-1} - \b (1+x)^{-1} ) (w_{\a,\b}(x))^{-1}$, it is easy to see that our main
task is to establish the inequality 
$$
  \int_{-1}^1 \left| \frac{\phi(x)}{(1-x)w_{\a,\b}(x)} \int_x^1 g(y) w_{\a,\b}(y)dy \right|^q w_{\a,\b}(x) dx \le c \|g\|_{L^q(w_{\a,\b})}^q. 
$$
For $q = \infty$, it is easy to see that we need $\b+1-1/2 \le 0$ or $\b \le -1/2$. For $1 < q < \infty$, we can follow the
proof in (a) and conclude that the inequality holds if $-\b (q-1)> q/2-1$ or $\b < p/2 -1$. This also holds for $q=1$ as can be seen by exchanging the order of the integrals. 
\end{proof}
 
It is worthwhile to mention that if $s > 1$ and $\b \ne 0$, then we apply the product rule on $1/w_{\a,\b}(x) = (1-x)^{-\a}(1+x)^{-\b}$ 
to obtain 
$$
   \left| \frac{d^j}{dx^j}\left(\frac{1}{w_{\a,\b}(x)}\right)\right| (1-x)^{j-1} \le c \sum_{i=0}^j  (1-x)^{-\a+i-1} (1+x)^{-\b-i}, 
$$ 
so that, since the summand in the last expression is independent of $j$, 
$$
\left| u_{g,k}^{(2s-k)}(x) \right| \le |g(x)| + c \sum_{i=0}^{s-k} \frac{(1-x)^{i-1}}{(1-x)^{\a}(1+x)^{\b+i}}  \int_x^1 |g(y)| w_{\a,\b}(y) dy.
$$
Following the same proof as before, it is not difficult to see that we need $\b < (1- s/2)p -1$ for \eqref{eq:ug<g}. Since $\b > -1$, this makes sense only if $s=1$ and $\b < p/2 -1$. 

\begin{thm}
Let $\a,\b> -1$ and $f \in W_p^s(w_{\a,\b})$ for $1\le p < \infty$ and $f\in C^s[-1,1]$ if $p = \infty$. Then 
\begin{equation} \label{eq:simult-appx}
  \| \partial^k f - \partial^k \CV_{n,\t}^{\a-s,\b-s} f\|_{L^p(w_{\a,\b})} \le c \, n^{-s+k}  E_n(f^{(s)})_{L^p(w_{\a,\b})}, \quad 0 \le k \le s,
\end{equation}
holds under either one of the following sets of assumptions:
\begin{enumerate}[\quad (a)]
\item $\t = -1$, with (i) $s \in \NN$,  $\b=0$ and $1 \le p \le \infty$, or (ii) $s=1$, $\b < p/2-1$ for $1< p \le \infty$ 
and $\b \le p/2-1$ if $p=1$;
\item $\t = 1$, with (i) $s \in \NN$, $\a=0$ and $1 \le p \le \infty$, or (ii) $s=1$, $\a < p/2-1$ for $1< p \le \infty$ 
and $\a \le p/2-1$ if $p=1$.  
\end{enumerate}
Furthermore, 
for $p=2$, we can replace \eqref{eq:simult-appx} by
\begin{equation} \label{eq:simult-appx2}
  \| \partial^k f - \partial^k \CS_{n,\t}^{\a-s,\b-s} f\|_{L^2(w_{\a,\b})} \le c \, n^{-s+k}  E_{n-s}(f^{(s)})_{L^2(w_{\a,\b})}, 
  \quad 0 \le k \le s.
\end{equation}
\end{thm}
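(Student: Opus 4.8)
The plan is to settle the endpoint $k=s$ immediately and to reach the range $0\le k\le s-1$ by an Aubin--Nitsche duality argument resting on the function $u_{g,k}$ of \eqref{eq:ug}. For $k=s$, identity \eqref{eq:approxSP-c} gives $\partial^s f-\partial^s\CV_{n,\t}^{\a-s,\b-s}f=f^{(s)}-V_n^{\a,\b}f^{(s)}$, so \eqref{eq:simult-appx} is precisely the near best estimate \eqref{eq:Jacobi-near-best}. I treat $\t=-1$ and recover $\t=1$ at the end through the reflection $x\mapsto-x$, which interchanges $\a$ and $\b$. Abbreviate $v:=f-\CV_{n,-1}^{\a-s,\b-s}f$. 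Since the Taylor block of $\CV_{n,-1}^{\a-s,\b-s}f$ reproduces the derivatives of $f$ at $-1$ up to order $s-1$ while the integral remainder and its first $s-1$ derivatives vanish there, one has $v^{(j)}(-1)=0$ for $0\le j\le s-1$; moreover $v^{(s)}=f^{(s)}-V_n^{\a,\b}f^{(s)}$.

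Fix $0\le k\le s-1$ and let $q$ be conjugate to $p$. By the weighted duality, $\|v^{(k)}\|_{L^p(w_{\a,\b})}=\sup\{\int_{-1}^1 v^{(k)}g\,w_{\a,\b}:\ \|g\|_{L^q(w_{\a,\b})}\le1\}$, so it suffices to bound the pairing for a fixed such $g$. For this $g$ I take $u_{g,k}$, which by construction solves the boundary value problem \eqref{eq:diffq}. Writing the equation as $g\,w_{\a,\b}=(-1)^{s-k}\frac{d^{s-k}}{dx^{s-k}}\!\big(w_{\a,\b}\,u_{g,k}^{(s)}\big)$ and integrating by parts $s-k$ times so that all derivatives are shifted onto $v^{(k)}$, raising it to $v^{(s)}$, I obtain
\begin{equation*}
  \int_{-1}^1 v^{(k)}(x)\,g(x)\,w_{\a,\b}(x)\,dx=\int_{-1}^1 v^{(s)}(x)\,u_{g,k}^{(s)}(x)\,w_{\a,\b}(x)\,dx.
\end{equation*}
This is the variant of \lemref{lem:int-g} suited to the $k$-th derivative: the boundary contributions at $-1$ cancel because $v^{(j)}(-1)=0$ for $k\le j\le s-1$, and those at $1$ cancel because $\frac{d^j}{dx^j}(w_{\a,\b}u_{g,k}^{(s)})$ vanishes at $1$ for $0\le j\le s-k-1$, by \eqref{eq:diffq}.

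The powers of $n$ are produced by orthogonality. As the admissible cutoff satisfies $\eta(j/n)=1$ for $j\le n$, the Jacobi coefficients of $v^{(s)}=f^{(s)}-V_n^{\a,\b}f^{(s)}$ vanish through degree $n$, hence $\int_{-1}^1 v^{(s)}P\,w_{\a,\b}=0$ for all $P\in\Pi_n$. Subtracting the best $L^q(w_{\a,\b})$ approximant to $u_{g,k}^{(s)}$ and applying H\"older's inequality, then the near best estimate \eqref{eq:Jacobi-near-best}, the Jackson estimate \eqref{eq:Jackson2} with $r=s-k$, and finally the key bound \eqref{eq:ug<g}, I get
\begin{equation*}
  \Big|\int_{-1}^1 v^{(s)}u_{g,k}^{(s)}w_{\a,\b}\Big|\le c\,E_n(f^{(s)})_{L^p(w_{\a,\b})}\,E_n\big(u_{g,k}^{(s)}\big)_{L^q(w_{\a,\b})}\le c\,n^{-s+k}E_n(f^{(s)})_{L^p(w_{\a,\b})}\|g\|_{L^q(w_{\a,\b})},
\end{equation*}
where the last step uses $E_n(u_{g,k}^{(s)})_{L^q}\le c\,n^{-(s-k)}\|\phi^{s-k}u_{g,k}^{(2s-k)}\|_{L^q}\le c\,n^{-(s-k)}\|g\|_{L^q}$. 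Taking the supremum over $g$ gives \eqref{eq:simult-appx}; for $p=\infty$ the same duality with $q=1$ applies, since $v^{(k)}$ is continuous and $w_{\a,\b}>0$ on $(-1,1)$. The hypotheses in (a) are exactly those making \eqref{eq:ug<g} valid for the dual exponent $q$ (the lemma's $p$ coincides with the present $p$), and (b) follows by the reflection $x\mapsto-x$. For $p=2$ the identical argument with $\CS_{n,-1}^{\a-s,\b-s}$ in place of $\CV_{n,-1}^{\a-s,\b-s}$ yields \eqref{eq:simult-appx2}: by $\partial^s\CS_n^{\a-s,\b-s}f=S_{n-s}^{\a,\b}f^{(s)}$ the error $v^{(s)}=f^{(s)}-S_{n-s}^{\a,\b}f^{(s)}$ is exactly orthogonal to $\Pi_{n-s}$ with $\|v^{(s)}\|_{L^2(w_{\a,\b})}=E_{n-s}(f^{(s)})_{L^2(w_{\a,\b})}$.

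I expect the main obstacle to be the integration-by-parts identity of the second paragraph: one must verify that every boundary term genuinely cancels, which is exactly why the construction is anchored at $\t=-1$ (to force $v^{(j)}(-1)=0$) and why $u_{g,k}$ is required to satisfy the $s-k$ vanishing conditions on $w_{\a,\b}u_{g,k}^{(s)}$ at $1$. The remaining analytic weight, the bound \eqref{eq:ug<g}, is already isolated as a lemma, and it is there that the restrictions $\b=0$ or ($s=1$, $\b<p/2-1$) are truly needed; the duality itself is otherwise formal.
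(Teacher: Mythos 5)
Your proof is correct and follows essentially the same Aubin--Nitsche duality argument as the paper: the integration-by-parts identity you derive is exactly \lemref{lem:int-g} applied to $\partial^k f - \partial^k \CV_{n,-1}^{\a-s,\b-s}f$, and your chain of estimates (orthogonality of $f^{(s)}-V_n^{\a,\b}f^{(s)}$ to $\Pi_n$, H\"older, the Jackson bound \eqref{eq:Jackson2} with $r=s-k$, and the key inequality \eqref{eq:ug<g}) is the paper's. The only cosmetic differences are that you subtract the best $L^q(w_{\a,\b})$ approximant where the paper uses $V_{\lfloor \frac{n}{2}\rfloor}^{\a,\b}u_{g,k}^{(s)}$, and you obtain the case $\t=1$ by the reflection $x\mapsto -x$ where the paper instead redefines $u_{g,k}$ with the roles of the endpoints exchanged.
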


\begin{proof}
Let us first consider the case $\t = -1$ and write $\CV_{n}^{\a-s,\b-s} f = \CV_{n,-1}^{\a-s,\b-s} f$ for convenience. 
Applying Lemma \ref{lem:int-g} with $v =  \partial^k f - \partial^k \CV_{n}^{\a-s,\b-s} f$ and using \eqref{eq:approxSP-c}, we have
\begin{align*}
\la  \partial^k f - \partial^k \CV_{n}^{\a-s,\b-s} f, g \ra_{\a,\b}  & = \la  f^{(s)} - V_{n}^{\a,\b} f^{(s)},  u_{g,k}^{(s)} \ra_{\a,\b} \\
  & = \la   f^{(s)} - V_{n}^{\a,\b} f^{(s)}, u_{g,k}^{(s)} - V_{\lfloor \frac{n}2\rfloor}^{\a,\b} u_{g,k}^{(s)} \ra_{\a,\b},
\end{align*}
where the second equation follows from the orthogonality of $f^{(s)} - \CV_{n}^{\a,\b} f^{(s)}$ to any polynomials of degree 
at most $n$. By Theorem \ref{thm:Jackson} and \eqref{eq:ug<g}, 
\begin{align*}
\|u_{g,k}^{(s)} - V_{\lfloor \frac{n}2\rfloor}^{\a,\b} u_{g,k}^{(s)}\|_{L^q(w_{\a,\b})} & \le c \, 
    E_{\lfloor \frac{n}2\rfloor}\left(u_{g,k}^{(s)}\right)_{L^q(w_{\a,\b})}\\
    &  \le c \, n^{-s+k} \| \phi^{s-k} u_{g,k}^{(2s-k)}\|_{L^q(w_{\a,\b})} 
        \le c \, n^{-s+k} \| g\|_{L^q(w_{\a,\b})},
\end{align*}
so that, by the H\"older inequality, we conclude that 
\begin{align*}
 \left | \la  \partial^k f - \partial^k \CV_{n}^{\a-s,\b-s} f, g \ra_{\a,\b} \right |  
 & \le c\, \| f^{(s)} - \CV_{n}^{\a,\b} f^{(s)}\|_{L^p(w_{\a,\b})}
      \|u_{g,k}^{(s)} - V_{\lfloor \frac{n}2\rfloor}^{\a,\b} u_{g,k}^{(s)}\|_{L^q(w_{\a,\b})}  \\
  &  \le c \, n^{-s+k} E_n(f)_{L^p(w_{\a,\b})} \| g\|_{L^q(w_{\a,\b})}. 
\end{align*}
Applying the above inequality to the following expression of the $L^p$ norm
\begin{align*}
  \| \partial^k f - \partial^k \CV_{n}^{\a-s,\b-s} f\|_{L^p(w_{\a,\b})} = \sup_{\|g\|_{L^q(w_{\a,\b})} \ne 0} 
      \frac{ \la  \partial^k f - \partial^k \CV_{n}^{\a-s,\b-s} f, g \ra_{\a,\b}}{\|g\|_{L^q(w_{\a,\b})}} 
\end{align*}
completes the proof \eqref{eq:simult-appx} for $\t = -1$. 

The proof for $\t =1$ follows similarly with $u_{g,k}$ in \eqref{eq:ug} replaced by 
\begin{equation*}
  u_{g,k}(x):= \int_{x}^1 \frac{(t-x)^{s-1}}{(s-1)!} \frac{1}{w_{\a,\b}(t)}  \int_{-1}^t \frac{(t-y)^{s-k-1}}{(s-k-1)!} g(y) w_{\a,\b}(y) dy dt 
\end{equation*}
for $0 \le k \le s-1$. The proof follows along the same line and we omit the details. 
\end{proof}
 
Evidently, Theorem \ref{thm:1.2} follows from this theorem with either $\a = 0$ or $\b =0$.

\bigskip
\noindent
{\bf Acknowledgement.} The author thanks Danny Leviatan for his careful readings and corrections.

\end{document}